\newcommand{\cG}{{\mathcal G}}
\renewcommand{\phi}{\varphi}
\newcommand{\R}{{\mathbb R}}
\newcommand{\N}{{\mathbb N}}
\newcommand{\HH}{\mathcal{H}}
\renewcommand{\geq }{\geqslant}
\renewcommand{\leq }{\leqslant}
\def\neweq#1{\begin{equation}\label{#1}}
\def\endeq{\end{equation}}
\def\eq#1{(\ref{#1})}
\newtheorem{theorem}{Theorem}[section]
\newtheorem{proposition}[theorem]{Proposition}
\newtheorem{lemma}[theorem]{Lemma}
\newtheorem{corollary}[theorem]{Corollary}
\newtheorem{definition}[theorem]{Definition}
\theoremstyle{definition}
\begin{document}

\title[Symmetry in composite partially hinged plates]{About symmetry in partially hinged composite plates}

\author[Elvise BERCHIO]{Elvise BERCHIO}
\address{\hbox{\parbox{5.7in}{\medskip\noindent{Dipartimento di Scienze Matematiche, \\
Politecnico di Torino,\\ Corso Duca degli Abruzzi 24, 10129 Torino, Italy. \\[3pt]
\em{E-mail address: }{\tt elvise.berchio@polito.it}}}}}
\author[Alessio FALOCCHI]{Alessio FALOCCHI}
\address{\hbox{\parbox{5.7in}{\medskip\noindent{Dipartimento di Scienze Matematiche, \\
Politecnico di Torino,\\ Corso Duca degli Abruzzi 24, 10129 Torino, Italy. \\[3pt]
\em{E-mail address: }{\tt alessio.falocchi@polito.it}}}}}


\keywords{composite plate problem; partially hinged plate; Green function; polarization}

\subjclass[2010]{35J08, 35P05, 74K20}

\begin{abstract}
We consider a partially hinged composite plate problem and we investigate qualitative properties, e.g. symmetry and monotonicity, of the eigenfunction corresponding to the density minimizing the first eigenvalue. The analysis is performed by showing related properties of the Green function of the operator and by applying polarization with respect to a fixed plane. As a by-product of the study, we obtain a Hopf type boundary lemma for the operator having its own theoretical interest. The statements are complemented by numerical results.
\end{abstract}

\maketitle

\section{Introduction}
Let $\Omega=(0,\pi)\times(-\ell,\ell)\subset\R^2\, 
$ with $\ell>0$, we consider the weighted eigenvalue problem:
\begin{equation}\label{weight}
\begin{cases}
\Delta^2 u=\lambda\, p(x,y) u & \qquad \text{in } \Omega \\
u(0,y)=u_{xx}(0,y)=u(\pi,y)=u_{xx}(\pi,y)=0 & \qquad \text{for } y\in (-\ell,\ell)\\
u_{yy}(x,\pm\ell)+\sigma
u_{xx}(x,\pm\ell)=u_{yyy}(x,\pm\ell)+(2-\sigma)u_{xxy}(x,\pm\ell)=0
& \qquad \text{for } x\in (0,\pi)\, ,
\end{cases}
\end{equation}
where $\sigma\in[0,1)$ and, for $\alpha,\beta \in(0,+\infty)$ with $\alpha<\beta$ fixed, $p$ belongs to the following family of weights:
\begin{equation} \label{eq:famiglia}
P_{\alpha, \beta}:=\left\{p\in L^\infty(\Omega): \alpha\leq p\leq\beta \ \text{a.e. in } \Omega \
\text{ and }\int_{\Omega}p\,dxdy=|\Omega| \, \right\} \, .
\end{equation}
The interest for problem \eqref{weight} is due to the fact that it describes the oscillating modes of the non-homogeneous partially hinged rectangular plate $\Omega$ which, up to scaling, can model the decks of footbridges and suspension bridges, see \cite{bebuga2,fergaz,bookgaz} for more details; in particular, the partially hinged boundary conditions reflect the fact that decks of bridges are supported by the ground only at the short edges. We also remark that, in this framework, $\sigma$ represents the so-called Poisson ratio which for most materials belongs to the interval $[0,1)$, $p$ represents the density function of the plate and the integral condition in \eqref{eq:famiglia} means that the total mass of the plate is preserved. \par
In order to study the stability properties of the plate it is important to investigate the effect of the density function $p$ on the eigenvalues, i.e on the frequencies of the plate. In this respect, the starting point of the study is the minimization problem:
\begin{equation}\label{pbintro}
\inf_{p \in P_{\alpha, \beta}} \, \lambda_1(p)\,,
\end{equation}
where $\lambda_1(p)$ denotes the first eigenvalue of \eqref{weight}.
There exists a rich literature dealing with the second order Dirichlet version of \eqref{weight}-\eqref{pbintro} which is usually named \emph{composite membrane} problem; this corresponds to the problem of building a body of prescribed shape and mass, out of given materials in such a way that the first frequency of the resulting membrane is as small as possible, see e.g. \cite{chanillo1}-\cite{chanillo3} and the monograph \cite{henrot}.  In the fourth order case, problem \eqref{pbintro} is named \emph{composite plate} problem and has been mainly studied under clamped (Dirichlet) or hinged (Navier) boundary conditions, see e.g. \cite{anedda2},\cite{chen}-\cite{cuccu1}, \cite{lapr}. As far as we are aware, the \emph{partially hinged} composite plate problem \eqref{weight}-\eqref{pbintro} has only been studied in \cite{befafega}, see also \cite{befa} for results about higher eigenvalues; in \cite{befafega} it is proved that the infimum in \eqref{pbintro} is achieved by the piecewise constant density:
$$\widehat{p}(x,y) = \alpha \chi_{S} (x,y)+ \beta \chi_{\Omega \setminus S}(x,y)\,,
$$
where $\chi_{S}$ denotes the characteristic function of a suitable set $S \subset \Omega$, see Proposition \ref{thm-exist-qual} in Section \ref{main}. This information is useful in engineering applications, since the assemblage of two materials with constant density is simpler than the manufacturing of a material having variable density; however, the region $S$ is given in terms of sub and super level sets of the eigenfunction $u_{\widehat p}$ of $\lambda_1(\widehat p)$ which is not explicitly known. Hence, in order to find more precise information about the location of the two materials,  it is important to study the qualitative properties of $u_{\widehat p}$ . 
In this field of research, typical results are qualitative properties, such as symmetry or monotonicity, of the first eigenfunction corresponding to the minimizer of \eqref{pbintro}, see e.g. \cite{anedda2}, \cite{chanillo} and references therein.
From this point of view, a crucial obstruction, when passing from the membrane to the plate problem, i.e. from the second to the fourth order case,
is represented by the loss of maximum and comparison principles which usually enter in the techniques applied to prove symmetry results, such as reflections methods or moving planes techniques. Nevertheless, some interesting results have been recently obtained in \cite{CV} and \cite{CV2} for the fourth order equation by exploiting suitable choices of the boundary conditions and of the geometry of the domain for which proper comparison principles hold (e.g. by considering Navier boundary conditions on sufficiently smooth domains or Dirichlet boundary conditions on balls). 
\par

Regarding problem \eqref{weight}, the above mentioned difficulties are further increased by the unusual boundary conditions and only few results about qualitative properties of the first eigenfunction were proved in \cite{befafega} for $p=p(y)$. An important step forward in the study of \eqref{weight} has been recently done in \cite{ppp} by computing explicitly the Fourier expansion of the Green function of the operator in \eqref{weight} and by showing its positivity, see Proposition \ref{monotonia} below. In particular, as a direct consequence of these results, it follows the positivity of the first eigenfunction of \eqref{weight} and the simplicity of the first eigenvalue which are not obvious facts when dealing with higher order PDEs. The main aim of the present paper is to investigate reflection and monotonicity properties of the Green function in order to, possibly, exploit them to deduce related properties of the eigenfunction $u_{\widehat p}$. Broadly speaking, the idea is to replace maximum principle arguments, not available in this case, with arguments based on the explicit knowledge of the Green function. To our best knowledge, this idea was first exploited in \cite{begawe} and \cite{fgw}; in particular, in \cite{begawe} a variant of the moving plane method, relying on fine estimates for the Green function \cite{bo}, was developed in order to prove Gidas-Ni-Nirenberg type symmetry results for higher order Dirichlet problems in the ball.  A similar approach has also been recently adopted in \cite{CV} for the Dirichlet composite plate problem in the ball, in order to prove radial symmetry and monotonicity of the first eigenfunction. We notice that all above mentioned proofs are based on \emph{polarization}, a simple two-point
rearrangement for functions which is well defined in first order Sobolev spaces, spaces of continuous functions or $L^p$-spaces, see e.g.
\cite{bartsch-weth-willem,brock,brock-solynin,smets-willem,weth}. 

However, since the Green function to \eqref{weight} is only known in terms of its Fourier expansion, it is hard to get in our case all the precise information available for the Green function of the Dirichlet problem in balls, see \cite[Chapter 6]{book}, and, in turn, to adopt in our framework the moving plane method as done in \cite{CV}. Nevertheless, we still managed to apply polarization by fixing the plane of reflection equal to the line $x=\pi/2$ and by exploiting suitable reflection properties proved for the Green function with respect to this line. More precisely, we first establish a duality
principle which reduces our minimization
problem in $H^2$ to a maximization problem in
$L^{2}$ and then, with the help of polarization, we prove a partial symmetry result in the $x$-direction for the maximizers of the reduced problem. We remark that, in general, it is quite delicate to exploit polarization in the higher order case since the
polarization of an $H^2$-function is not contained in $H^2$
anymore; the duality principle helps us to overcome this difficulty, see Lemma \ref{dualityprinciple}. We refer the interested reader to \cite{begawe} where a similar idea was originally exploited to prove partial symmetry of minimizers for subcritical higher order Sobolev embeddings into weighted $L^p$ spaces. Unfortunately, the fact of reflecting with respect to a fixed plane does not allow us to get monotonicity information about $u_{\widehat p}$ as it happens, instead, when applying the moving plane method. However, by a direct inspection of the Green function derivatives we succeed in deducing some local information about the derivatives of $u_{\widehat p}$. It is worth mentioning that, as a by-product of our analysis, we also obtain a Hopf type boundary lemma for the operator \eqref{weight} having it own theoretical interest, see Corollary \ref{hopfcor}.\par \smallskip\par

The paper is organised as follows: in Section \ref{main} we set precisely our problem an we state our main results while in Section \ref{num1} we complement the study with suitable numerical results. The other sections are devoted to the proofs of the results.
\section{Main results}\label{main}

The natural functional space where to set problem \eq{weight} is
$$
H^2_*(\Omega)=\big\{u\in H^2(\Omega): u=0\mathrm{\ on\ }\{0,\pi\}\times(-\ell,\ell)\big\}\,.
$$
Note that the condition $u=0$ has to be meant in a classical sense because $\Omega$ is a planar domain and the energy space $H^2_*(\Omega)$ embeds into continuous functions. 
Furthermore, for $\sigma\in[0,1)$ fixed, $H^2_*(\Omega)$ is a Hilbert space when endowed with the scalar product
$$
(u,v)_{H^2_*(\Omega)}:=\int_\Omega \left[\Delta u\Delta v+(1-\sigma)(2u_{xy}v_{xy}-u_{xx}v_{yy}-u_{yy}v_{xx})\right]\, dx \, dy \,
$$
with associated norm
$
\|u\|_{H^2_*(\Omega)}^2=(u,u)_{H^2_*(\Omega)} \, 
$
which is equivalent to the usual norm in $H^2(\Omega)$, see \cite[Lemma 4.1]{fergaz}. Problem \eq{weight} in weak form reads
\begin{equation}
\label{eigenweak1}
(u,\varphi)_{H^2_*(\Omega)} =\lambda(p\, u,\varphi )_{L^2(\Omega)} \qquad\forall \varphi\in H^2_*(\Omega).
\end{equation}
Hence, the first eigenvalue can be characterized as follows:
\begin{equation}\label{lambdaP}
\lambda_1(p) :=  \, \min_{u \in H^2_*(\Omega) \setminus\{0\}} \frac{\|u\|_{H^2_*}^2}{\|\sqrt{p}\,u\|_{2}^2}.
\end{equation}
It is well known that the sign and simplicity property of the first eigenfunction of a differential operator are strictly related to the sign property of its Green function. For $p=(\rho,w)\in\overline \Omega$ fixed, the \emph{Green function} to the operator in \eqref{weight} is, by definition, the unique solution $G(\cdot,p)\in H^2_*(\Omega)$ to:
	\begin{equation*}
	(G(\cdot,p),\varphi)_{H^2_*(\Omega)} =\langle \delta_{p},\varphi\rangle=\varphi(p) \qquad\forall \varphi\in H^2_*(\Omega)\,
	\end{equation*}
	and it has been recently computed in \cite{ppp}; we recall the precise statement here below.
\begin{proposition}\label{monotonia} \cite{ppp}
	There holds
	\begin{equation}\label{green}
	G(x,y,\rho,w)= \dfrac{1}{2\pi}\sum_{m=1}^{+\infty}\dfrac{\phi_m(y,w)}{m^3}\sin(m\rho)\,\sin(mx) \qquad \forall (x,y)\in\overline \Omega\quad\forall (\rho,w)\in\overline \Omega,
	\end{equation}
	where the $\phi_m\in C^2([-\ell, \ell]\times [-\ell, \ell])$ are strictly positive and strictly decreasing with respect to $m$, i.e.
	\begin{equation}\label{decreasing}
	0<\phi_{m+1}(y,w)<\phi_m(y,w)\qquad\forall m\in\mathbb{N^+},\forall y,w\in[-\ell,\ell]\,.
	\end{equation}
	Furthermore, $G\in C^0(\overline \Omega\times \overline \Omega)$ and 
	\begin{equation}\label{tesi}
	G(x,y,\rho,w)> 0\qquad \forall (x,y) \in (0,\pi)\times[-\ell,\ell]\quad  \forall (\rho,w) \in (0,\pi)\times[-\ell,\ell].
	\end{equation}
\end{proposition}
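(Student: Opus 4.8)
The plan is to obtain $G$ in closed form by separation of variables and then to read \eqref{green}--\eqref{tesi} off the resulting expression. Since the short‑edge conditions $u=u_{xx}=0$ on $\{0,\pi\}\times(-\ell,\ell)$ are diagonalised by the orthogonal basis $\{\sin(mx)\}_{m\ge1}$ of $L^2(0,\pi)$, I would expand the Dirac mass as $\delta_\rho(x)=\frac{2}{\pi}\sum_{m\ge1}\sin(m\rho)\sin(mx)$ and look for $G(x,y,\rho,w)=\sum_{m\ge1}\Psi_m(y,w)\sin(m\rho)\sin(mx)$, the symmetry in $(x,\rho)$ being forced by self‑adjointness. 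Testing $(G(\cdot,p),\varphi)_{H^2_*(\Omega)}=\varphi(p)$ against $\varphi(x,y)=\chi(y)\sin(mx)$ reduces the problem, mode by mode, to the one‑dimensional fourth order problem
\begin{equation*}
\Psi_m''''-2m^2\Psi_m''+m^4\Psi_m=\tfrac{2}{\pi}\,\delta_w\ \text{ in }(-\ell,\ell),\qquad
\Psi_m''-\sigma m^2\Psi_m=\Psi_m'''-(2-\sigma)m^2\Psi_m'=0\ \text{ at }y=\pm\ell.
\end{equation*}
The homogeneous operator factors as $(\partial_y^2-m^2)^2$, so a fundamental system is $\{\cosh(my),\,\sinh(my),\,my\cosh(my),\,my\sinh(my)\}$. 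Writing $\Psi_m$ as the standard two‑sided combination — one branch satisfying the two conditions at $y=-\ell$, the other the two at $y=+\ell$, matched in $C^2$ at $y=w$ with the prescribed jump of $\Psi_m'''$ — and solving the associated linear system (whose size is halved by the reflection $y\mapsto-y$ of $\Omega$) yields $\Psi_m$ explicitly; setting $\phi_m:=2\pi m^3\Psi_m$ produces the normalisation in \eqref{green}. As $\phi_m$ is made of entire functions, smooth off the diagonal and $C^2$ across it by the matching conditions, $\phi_m\in C^2([-\ell,\ell]\times[-\ell,\ell])$.

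The heart of the matter is to deduce from this closed form that $\phi_m>0$ on $[-\ell,\ell]^2$ and that the monotonicity \eqref{decreasing} holds, the hypothesis $\sigma\in[0,1)$ being essential. A convenient way to organise the positivity — which can either guide or replace a direct estimate of the formula — is to introduce $\psi_m:=(m^2-\partial_y^2)\phi_m$: it solves $(m^2-\partial_y^2)\psi_m=\tfrac{2}{\pi}\delta_w$ with the coupled data $\psi_m=(1-\sigma)m^2\phi_m$ and $\psi_m'=-(1-\sigma)m^2\phi_m'$ at $y=\pm\ell$; the second order operator $m^2-\partial_y^2$ is positivity preserving, and $\sigma<1$ is precisely what keeps the signs right through the two consecutive second order inversions. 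The strict monotonicity \eqref{decreasing} is then checked either on the sign of $\phi_m-\phi_{m+1}$ in the explicit expression or by a comparison argument for the $m$‑indexed family of one‑dimensional Green operators. Granting \eqref{decreasing}, one has $0<\phi_{m+1}<\phi_m\le M:=\max_{[-\ell,\ell]^2}\phi_1<\infty$, so the general term of \eqref{green} is bounded by $M/m^3$ uniformly on $\overline\Omega\times\overline\Omega$; by the Weierstrass test the series converges absolutely and uniformly, whence $G\in C^0(\overline\Omega\times\overline\Omega)$.

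Finally I turn to \eqref{tesi}. Positivity of the $\phi_m$ is not enough, since $\sin(m\rho)\sin(mx)$ changes sign; this is where \eqref{decreasing} is used. Summation by parts gives, with $\phi_\infty:=\lim_m\phi_m\ge0$ and $T_k(\rho,x):=\sum_{m=1}^k\frac{\sin(m\rho)\sin(mx)}{m^3}$,
\begin{equation*}
2\pi\,G(x,y,\rho,w)=\phi_\infty(y,w)\,T_\infty(\rho,x)+\sum_{k\ge1}\bigl(\phi_k(y,w)-\phi_{k+1}(y,w)\bigr)\,T_k(\rho,x),
\end{equation*}
the interchange of summations being legitimate by absolute convergence. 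Since $\phi_k-\phi_{k+1}>0$ and $\phi_\infty\ge0$, it suffices to show $T_k(\rho,x)>0$ for every $k\ge1$ and all $\rho,x\in(0,\pi)$. Writing $2T_k(\rho,x)=C_k(|\rho-x|)-C_k(\rho+x)$ with $C_k(\theta):=\sum_{m=1}^k\frac{\cos(m\theta)}{m^3}$, one observes that $C_k(2\pi-\theta)=C_k(\theta)$ and that $C_k$ is strictly decreasing on $[0,\pi]$, because $C_k'(\theta)=-\sum_{m=1}^k\frac{\sin(m\theta)}{m^2}<0$ there — a Fej\'er--Jackson type positivity, obtained from the classical inequality $\sum_{m=1}^k\frac{\sin(m\theta)}{m}>0$ by two summations by parts. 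Since for $\rho,x\in(0,\pi)$ the reduced angle $|\rho-x|$ is strictly smaller than both $\rho+x$ and $2\pi-(\rho+x)$ (the latter because $|\rho-x|+\rho+x=2\max(\rho,x)<2\pi$), this forces $C_k(|\rho-x|)>C_k(\rho+x)$, hence $T_k>0$ — indeed already the $k=1$ term in the sum above is positive — and \eqref{tesi} follows. The main obstacle, as flagged, is the step from the explicit form of $\phi_m$ to its positivity and to \eqref{decreasing}.
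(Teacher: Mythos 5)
This statement is cited from reference \cite{ppp} and the paper under review gives no proof of it, so there is nothing in the present text to compare against line by line. I will therefore assess your reconstruction on its own merits, using the paper's remarks about \cite{ppp} as evidence for what the cited proof must contain.

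Your framing is the right one: diagonalise in $\sin(mx)$, reduce to a one-dimensional fourth-order Green-function problem on $(-\ell,\ell)$ with the correct mode-by-mode boundary conditions, observe that the characteristic root $\pm m$ is double so the fundamental system is $\{\cosh(my),\sinh(my),y\cosh(my),y\sinh(my)\}$, match across $y=w$, and set $\phi_m=2\pi m^3\Psi_m$. The regularity claim $\phi_m\in C^2$ and the Weierstrass-test argument for $G\in C^0(\overline\Omega\times\overline\Omega)$ (once \eqref{decreasing} gives the uniform bound $\phi_m\le\max\phi_1$) are fine. The last step — deducing \eqref{tesi} from \eqref{decreasing} — is, in your version, a genuinely elegant argument: Abel summation to pass to the partial sums $T_k$, the product-to-sum identity $2T_k=C_k(|\rho-x|)-C_k(\rho+x)$, strict monotonicity of $C_k$ on $[0,\pi]$ obtained from the Fej\'er--Jackson inequality by one further Abel summation, and the angle comparison $|\rho-x|<\min\{\rho+x,\,2\pi-(\rho+x)\}$ on $(0,\pi)^2$. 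This is correct and is noticeably slicker than the term-by-term estimates the present paper uses in Section 4 for the closely related series $\mathcal{S}_1$; in fact the same Abel-plus-Fej\'er--Jackson device, applied with weight $1/m^2$ instead of $1/m^3$, would prove $\mathcal{S}_1>0$ directly. That is a real methodological difference and a nice one.

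However, there is a genuine gap, and it is exactly where the paper warns one to expect it. You do not actually prove that $\phi_m>0$, and you do not prove \eqref{decreasing} at all. For the positivity you propose the factorisation through $\psi_m:=(m^2-\partial_y^2)\phi_m$, but the boundary data you write for $\psi_m$, namely $\psi_m=(1-\sigma)m^2\phi_m$ and $\psi_m'=-(1-\sigma)m^2\phi_m'$ at $y=\pm\ell$, involve the unknown $\phi_m$ itself; the system is coupled, not a composition of two independent positivity-preserving second-order solves, so ``keeps the signs right through two consecutive inversions'' is not an argument but a hope. As for the strict monotonicity in $m$, you say it is ``then checked either on the sign of $\phi_m-\phi_{m+1}$ in the explicit expression or by a comparison argument'', which is precisely the step the paper singles out: ``the hardest part is the proof of \eqref{decreasing} which follows only after lengthy computations.'' A comparison argument for the one-parameter family of $(m^2-\partial_y^2)^2$ Green functions under these mixed free-edge conditions is not at all obvious (the boundary operators themselves depend on $m$), and the explicit-expression route is, per \cite{ppp}, a substantial computation. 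So while your reduction and your final positivity-of-$G$ step are sound and arguably improve on the published route, the two central technical facts — $\phi_m>0$ and $\phi_{m+1}<\phi_m$ — remain unproved in your sketch, and without them the rest does not stand.
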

For the explicit (and very involved) expression of the functions  $\phi_m$, we refer the interested reader to \cite{ppp}. In the present paper we enrich the statement of Proposition \ref{monotonia} by showing that:
	\begin{theorem}\label{greenmonotonia}
	 For all $y,w \in [-\ell,\ell]$, there holds:
	\begin{equation*}\label{tesi1}
	\begin{split}
	&\, G_x(0,y,\rho,w)> 0\quad \text{ and } \quad G_x(\pi,y,\rho,w)< 0\qquad \quad\forall\rho\in(0,\pi) ;\\
	&\, G_x\bigg(\frac{\pi}{2},y,\rho,w\bigg)<0\quad \forall \rho\in\bigg(0,\frac{\pi}{2}\bigg)\,;\quad G_x\bigg(\frac{\pi}{2},y,\frac{\pi}{2},w\bigg)=0\,;\quad G_x\bigg(\frac{\pi}{2},y,\rho,w\bigg)>0\quad \forall \rho\in\bigg(\frac{\pi}{2}, \pi\bigg)\,,
		\end{split}
	\end{equation*}
	where the derivative of $G$ in the $x$-direction are meant in classical sense and $G_x\in  C^0(\overline \Omega\times \overline \Omega)$.
	Furthermore, since $G(x,y,\rho,w)=G(\rho,y,x,w)$, the above results hold by inverting $x$ and $\rho$.

\end{theorem}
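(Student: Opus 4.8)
The plan is to exploit the explicit Fourier expansion \eqref{green} of the Green function and to differentiate it termwise in the $x$-variable. Since $G(x,y,\rho,w)=\frac{1}{2\pi}\sum_{m\ge 1}\frac{\phi_m(y,w)}{m^3}\sin(m\rho)\sin(mx)$, formally one has
\[
G_x(x,y,\rho,w)=\frac{1}{2\pi}\sum_{m\ge 1}\frac{\phi_m(y,w)}{m^2}\sin(m\rho)\cos(mx)\,.
\]
The first step is to justify this differentiation and the claimed continuity $G_x\in C^0(\overline\Omega\times\overline\Omega)$: because $\phi_m$ is bounded uniformly in $m$ (indeed $0<\phi_m\le\phi_1$ by \eqref{decreasing}) and $\sum_m m^{-2}<\infty$, the differentiated series converges absolutely and uniformly on $\overline\Omega\times\overline\Omega$, so the interchange of sum and derivative is legitimate and $G_x$ is continuous; this is where I would invoke the qualitative information on $\phi_m$ from Proposition \ref{monotonia} rather than its involved closed form.

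Next I would evaluate at the three relevant values of $x$. At $x=0$ we get $G_x(0,y,\rho,w)=\frac{1}{2\pi}\sum_m\frac{\phi_m(y,w)}{m^2}\sin(m\rho)$, and at $x=\pi$, using $\cos(m\pi)=(-1)^m$, we get $G_x(\pi,y,\rho,w)=\frac{1}{2\pi}\sum_m\frac{(-1)^m\phi_m(y,w)}{m^2}\sin(m\rho)$; replacing $\rho$ by $\pi-\rho$ turns the second into $-\frac{1}{2\pi}\sum_m\frac{\phi_m(y,w)}{m^2}\sin(m\rho)$ (since $\sin(m(\pi-\rho))=(-1)^{m+1}\sin(m\rho)$), so $G_x(\pi,y,\rho,w)=-G_x(0,y,\pi-\rho,w)$; it therefore suffices to show the single inequality
\[
\Phi(\rho):=\sum_{m\ge 1}\frac{\phi_m(y,w)}{m^2}\sin(m\rho)>0\qquad\text{for }\rho\in(0,\pi)\,.
\]
For $x=\pi/2$ we have $\cos(m\pi/2)=0$ for $m$ odd and $(-1)^{m/2}$ for $m$ even, so
\[
G_x(\tfrac\pi2,y,\rho,w)=\frac{1}{2\pi}\sum_{k\ge 1}\frac{(-1)^k\phi_{2k}(y,w)}{(2k)^2}\sin(2k\rho)\,;
\]
with $\rho=\pi/2$ this vanishes term by term (as $\sin(k\pi)=0$), and in general one checks via the substitution $\rho\mapsto\pi-\rho$ that $G_x(\tfrac\pi2,y,\rho,w)=-G_x(\tfrac\pi2,y,\pi-\rho,w)$, so the claimed sign pattern on $(0,\pi/2)$ versus $(\pi/2,\pi)$ is a consequence of proving negativity on, say, $(0,\pi/2)$.

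The main obstacle is the positivity of sine series with \emph{decreasing} positive coefficients. The standard tool here is the classical theorem of Fejér–Jackson (or, more generally, the summation-by-parts argument): if $(c_m)$ is a nonincreasing sequence of nonnegative reals tending to $0$, then $\sum_{m\ge 1} c_m\sin(m\rho)\ge 0$ on $[0,\pi]$, with strict positivity on $(0,\pi)$ provided the $c_m$ are not all zero, which follows from writing the partial sums via the conjugate Dirichlet kernel $\sum_{m=1}^n\sin(m\rho)=\frac{\cos(\rho/2)-\cos((n+1/2)\rho)}{2\sin(\rho/2)}\in[0,\cot(\rho/2)]$ and applying Abel summation using the monotonicity of $c_m$. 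I would apply this with $c_m=\phi_m(y,w)/m^2$, which is nonincreasing in $m$ and positive by \eqref{decreasing} together with $\phi_m>0$, to conclude $\Phi(\rho)>0$ on $(0,\pi)$, and hence the signs at $x=0$ and $x=\pi$. For $x=\pi/2$ the relevant series $\sum_k \frac{(-1)^{k+1}\phi_{2k}}{(2k)^2}\sin(2k\rho)$ is, after the substitution $\tau=2\rho\in(0,\pi)$ for $\rho\in(0,\pi/2)$, an \emph{alternating}-coefficient sine series; here I would rewrite $(-1)^{k+1}\sin(k\tau)=\sin(k(\pi-\tau))\cdot(-1)^{?}$—more precisely use $\sin(k\tau)=(-1)^{k+1}\sin(k(\pi-\tau))$ so that the series becomes $-\sum_k \frac{\phi_{2k}}{(2k)^2}\sin(k(\pi-\tau))$ with $\pi-\tau\in(0,\pi)$, again a sine series with positive nonincreasing coefficients, hence strictly positive; the extra minus sign yields $G_x(\tfrac\pi2,y,\rho,w)<0$ on $(0,\pi/2)$ as required. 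Finally, the symmetry $G(x,y,\rho,w)=G(\rho,y,x,w)$ is immediate from the expansion \eqref{green} since it is symmetric in $\sin(mx)\leftrightarrow\sin(m\rho)$, so all statements transfer with $x$ and $\rho$ interchanged. The only delicate point worth double-checking is the behaviour of $G_x$ at the corners and endpoints, but uniform convergence of the differentiated series settles continuity there.
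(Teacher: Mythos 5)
Your reductions are all fine: the termwise differentiation and continuity of $G_x$ from $0<\phi_m\le\phi_1$ and $\sum m^{-2}<\infty$, the identity $G_x(\pi,y,\rho,w)=-G_x(0,y,\pi-\rho,w)$, the evaluation of $G_x(\pi/2,\cdot)$ as a lacunary series over even modes, the substitution $\tau=2\rho$ and the sign flip, and the symmetry under $x\leftrightarrow\rho$. These match the paper's structure (where the relevant series are called $\mathcal S_1$ and $\mathcal S_2$ and the $x=\pi/2$ case is handled in the same way).

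The gap is in the central step. You appeal to a would-be theorem that $\sum_{m\ge1} c_m\sin(m\rho)\ge0$ on $[0,\pi]$ whenever $(c_m)$ is nonnegative, nonincreasing and tends to $0$, and you justify it by claiming the partial sums $D_n(\rho)=\sum_{m=1}^n\sin(m\rho)$ lie in $[0,\cot(\rho/2)]$. Both statements are false. Indeed
\[
D_n(\rho)=\frac{\cos(\rho/2)-\cos\bigl((n+\tfrac12)\rho\bigr)}{2\sin(\rho/2)}
\]
has minimum value $-\tfrac12\tan(\rho/4)<0$, not $0$: for instance $D_2(3\pi/4)=\sin(3\pi/4)+\sin(3\pi/2)=\tfrac{\sqrt2}{2}-1<0$. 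Consequently Abel summation with merely nonincreasing $c_m$ does not give a sign. A concrete counterexample to the positivity claim itself: take $c_1=1$, $c_2=0.9$, $c_m=\varepsilon 2^{-m}$ for $m\ge3$ with $\varepsilon$ tiny; this sequence is strictly decreasing to $0$, yet at $\rho=2.8$ one has $\sin\rho+0.9\sin2\rho\approx 0.335-0.568<0$, so the series is negative there. The classical results in this direction (Fej\'er, Young, Rogosinski) require convexity of $(c_m)$ or monotonicity of $(mc_m)$, and neither is available here: Proposition~\ref{monotonia} only gives that $\phi_m$ is positive and decreasing, so no general off-the-shelf positivity theorem applies to $c_m=\phi_m/m^2$.

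This is precisely why the paper's Lemma~\ref{lemma4} is the real work: it proves $\mathcal S_1(z)=\sum_m\phi_m m^{-2}\sin(mz)>0$ on $(0,\pi)$ by a bespoke three-step argument. Near the centre ($z\in[\pi/4,3\pi/4]$) it bounds the tail $\sum_{m\ge2}$ crudely by $\phi_1(\pi^2/6-1)$ and compares with $\phi_1\sin z$. Near $z=0$ it partitions $(0,\pi/4)$ into $[\pi/(N+2),\pi/(N+1))$ and on each piece compares a sharpened lower bound for $\sum_{m\le N}$ against the tail, exploiting both the decay $1/m^2$ and the monotonicity $\phi_m\ge\phi_N$. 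Near $z=\pi$ it pairs consecutive odd/even terms and uses the elementary inequality of Lemma~\ref{lemmasin2}. All of this uses the $m^{-2}$ factor essentially; monotonicity of $\phi_m$ alone would not suffice, as your own (erroneous) lemma shows. To repair your argument you would need to either reproduce an analysis of this type or find a positivity criterion whose hypotheses can be verified for $\phi_m/m^2$ using only \eqref{decreasing} — the latter does not seem possible with what is currently known about $\phi_m$.
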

It's worth pointing out that neither the proof of \eqref{tesi} or that of Theorem \ref{greenmonotonia} trivially follow from \eqref{green}; indeed, they require an accurate inspection of each term of the expansion and sharp estimates. In this respect, the hardest part is the proof of \eqref{decreasing} which follows only after lengthy computations.

A remarkable consequence of Proposition \ref{monotonia} is the validity of the positivity preserving property for the operator in \eqref{weight} whereas Theorem \ref{greenmonotonia} can be exploited to prove a Hopf type boundary lemma. For the sake of clarity we collect both statements in the following:
\begin{corollary}\label{hopfcor}
If $f\in L^2(\Omega)$ and $u\in H^2_*(\Omega)$ is a (weak) solution to
\begin{equation*}
\begin{cases}
\Delta^2 u=f & \qquad \text{in } \Omega \\
u(0,y)=u_{xx}(0,y)=u(\pi,y)=u_{xx}(\pi,y)=0 & \qquad \text{for } y\in (-\ell,\ell)\\
u_{yy}(x,\pm\ell)+\sigma
u_{xx}(x,\pm\ell)=u_{yyy}(x,\pm\ell)+(2-\sigma)u_{xxy}(x,\pm\ell)=0
& \qquad \text{for } x\in (0,\pi)\, ,
\end{cases}
\end{equation*}

then the following implication holds
\begin{equation}\label{tesi0}
	f\geq 0,\,\, f\not\equiv 0  \text{ in } \Omega\quad  \Rightarrow  \begin{cases} & u>0  \text{ in } (0,\pi)\times[-\ell,\ell];\\
	&u_{x}(0, y)>0 \text{ and } u_{x}(\pi, y)<0\,\,\, \forall\, y\in [-\ell, \ell]\,, \end{cases}
	\end{equation}
	where the derivatives of $u$ in \eqref{tesi0} are meant in classical sense.
\end{corollary}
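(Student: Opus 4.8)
The plan is to derive everything from the Green representation formula for $u$ together with Proposition~\ref{monotonia} and Theorem~\ref{greenmonotonia}. First I would establish the identity
\begin{equation*}
u(x,y)=\int_\Omega G(x,y,\rho,w)\,f(\rho,w)\,d\rho\,dw\qquad\forall(x,y)\in\overline\Omega\,.
\end{equation*}
Choosing $\varphi=u$ in the defining identity $(G(\cdot,(x,y)),\varphi)_{H^2_*(\Omega)}=\varphi(x,y)$ of the Green function gives $u(x,y)=(G(\cdot,(x,y)),u)_{H^2_*(\Omega)}$; by symmetry of the scalar product this equals $(u,G(\cdot,(x,y)))_{H^2_*(\Omega)}$, and testing the weak formulation $(u,\varphi)_{H^2_*(\Omega)}=\int_\Omega f\varphi$ with $\varphi=G(\cdot,(x,y))$ turns it into $\int_\Omega f(\rho,w)\,G(\rho,w,x,y)\,d\rho\,dw$. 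Finally, the symmetry $G(\rho,w,x,y)=G(x,y,\rho,w)$ — which follows from the symmetry of $(\cdot,\cdot)_{H^2_*(\Omega)}$ upon choosing $\varphi=G(\cdot,(x,y))$ in the definition of $G(\cdot,(\rho,w))$, and also from \eqref{green} once one knows $\phi_m(y,w)=\phi_m(w,y)$ — yields the formula. Since $H^2_*(\Omega)$ embeds into $C^0(\overline\Omega)$, point evaluation is a bounded functional, so the identity holds at every $(x,y)\in\overline\Omega$ and $u$ has a continuous representative, consistently with $G\in C^0(\overline\Omega\times\overline\Omega)$.

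For the positivity of $u$ I would fix $(x,y)\in(0,\pi)\times[-\ell,\ell]$ and use \eqref{tesi}: $G(x,y,\rho,w)>0$ for every $(\rho,w)\in(0,\pi)\times[-\ell,\ell]$, hence for a.e.\ $(\rho,w)\in\Omega$. Since $f\ge 0$ and $f\not\equiv 0$, the integrand $G(x,y,\cdot,\cdot)\,f$ is non-negative and strictly positive on a set of positive measure, so $u(x,y)>0$. This proves the first line of \eqref{tesi0}.

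For the boundary derivatives I would differentiate the representation formula under the integral sign in the $x$-variable. This is legitimate because, by Theorem~\ref{greenmonotonia}, $G_x\in C^0(\overline\Omega\times\overline\Omega)$ is bounded, while $f\in L^1(\Omega)$ since $\Omega$ is bounded and $f\in L^2(\Omega)$; writing the difference quotient of $G$ in $x$ via the mean value theorem and applying dominated convergence then shows that $x\mapsto u(x,y)$ is of class $C^1$ up to the boundary, with
\begin{equation*}
u_x(x,y)=\int_\Omega G_x(x,y,\rho,w)\,f(\rho,w)\,d\rho\,dw\,.
\end{equation*}
Evaluating at $x=0$ and using that, by Theorem~\ref{greenmonotonia}, $G_x(0,y,\rho,w)>0$ for all $\rho\in(0,\pi)$ and all $y,w\in[-\ell,\ell]$, the same positive-measure argument gives $u_x(0,y)>0$ for every $y\in[-\ell,\ell]$; analogously $G_x(\pi,y,\rho,w)<0$ yields $u_x(\pi,y)<0$. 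This establishes the second line of \eqref{tesi0} and concludes the proof.

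Once Proposition~\ref{monotonia} and Theorem~\ref{greenmonotonia} are available this is essentially immediate, so there is no genuine obstacle here: all the difficulty has already been absorbed into those two results (in particular the positivity \eqref{tesi}, the monotonicity \eqref{decreasing}, and the sign of $G_x$ at $x=0,\pi$). The only points that require a little care are the validity of the Green representation formula on $H^2_*(\Omega)$ — which rests on the continuity of point evaluation and on the symmetry of the bilinear form — and the differentiation under the integral sign, which is licensed precisely by the joint continuity of $G_x$ up to the boundary asserted in Theorem~\ref{greenmonotonia}.
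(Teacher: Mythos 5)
Your proposal is correct and follows exactly the paper's own argument: the paper likewise writes $u$ via the Green representation formula and reads off both the sign of $u$ and the sign of $u_x$ at $x=0,\pi$ directly from Proposition \ref{monotonia} and Theorem \ref{greenmonotonia}. Your additional justifications (symmetry of the scalar product yielding the representation formula, and differentiation under the integral sign via the continuity of $G_x$) merely make explicit the steps the paper leaves implicit.
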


Coming back to problem  \eq{weight}, in what follows we will always assume $$0<\alpha<1<\beta\,.$$ By exploiting Proposition \ref{monotonia} and Theorem \ref{greenmonotonia}, we also obtain the following statement about the first eigenfunction of \eqref{weight}:
\begin{corollary}\label{pppeig}
Let $p\in P_{\alpha,\beta}$ with $P_{\alpha,\beta}$ as in \eqref{eq:famiglia}. Then, the first eigenvalue $\lambda_1(p)$ of problem \eqref{weight}
 is simple and the first eigenfunction $u_{p}$ is of one sign in $\Omega$. Furthermore, $u_p \in C^{3,\gamma}(\overline \Omega)$ for some $0<\gamma <1$ and, assuming $u_p$ positive, we have: \begin{equation}\label{hopf}
 (u_p)_{x}(0, y)>0 \quad \text{and} \quad (u_p)_{x}(\pi, y)<0 \quad  \forall\, y\in [-\ell, \ell]\,.
 \end{equation}
\end{corollary}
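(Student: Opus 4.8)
The plan is to turn the weighted eigenvalue problem \eqref{eigenweak1} into a fixed point equation for a compact, self-adjoint, positivity improving integral operator, and then to deduce the three assertions from Proposition \ref{monotonia}, from the strict positivity \eqref{tesi}, and from Corollary \ref{hopfcor}, respectively.

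First I would introduce the Green operator $T\colon L^2(\Omega)\to L^2(\Omega)$ associated with \eqref{weight}: for $g\in L^2(\Omega)$, let $Tg\in H^2_*(\Omega)$ be the unique solution of $(Tg,\varphi)_{H^2_*(\Omega)}=(g,\varphi)_{L^2(\Omega)}$ for every $\varphi\in H^2_*(\Omega)$, which by Proposition \ref{monotonia} has the representation $Tg(x,y)=\int_\Omega G(x,y,\rho,w)\,g(\rho,w)\,d\rho\,dw$. The substitution $v=\sqrt p\,u$ turns \eqref{eigenweak1} into $v=\lambda\,Kv$ with $Kv:=\sqrt p\,T(\sqrt p\,v)$, so (since $p\ge\alpha>0$ makes multiplication by $\sqrt p$ an isomorphism of $L^2(\Omega)$) the eigenvalues of \eqref{weight} are exactly the reciprocals of the eigenvalues of $K$, with the same multiplicities and eigenspaces in correspondence via $u\mapsto\sqrt p\,u$. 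Because $G\in C^0(\overline\Omega\times\overline\Omega)$, the kernel $(x,y,\rho,w)\mapsto\sqrt{p(x,y)}\,G(x,y,\rho,w)\,\sqrt{p(\rho,w)}$ of $K$ belongs to $L^2(\Omega\times\Omega)$, so $K$ is self-adjoint and Hilbert--Schmidt, hence compact; by \eqref{tesi} and $p\ge\alpha>0$ this kernel is strictly positive a.e.\ on $\Omega\times\Omega$, so $K$ is positivity improving. Since, by \eqref{lambdaP}, $\lambda_1(p)$ is the smallest eigenvalue of \eqref{weight}, the number $1/\lambda_1(p)$ is the largest eigenvalue of $K$, i.e.\ its spectral radius. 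The Krein--Rutman theorem (equivalently Jentzsch's theorem for integral operators with a.e.\ positive kernel) then yields that $1/\lambda_1(p)$ is a simple eigenvalue of $K$ spanned by an a.e.\ positive eigenfunction; undoing the substitution gives that $\lambda_1(p)$ is simple and that $u_p$ is of one sign in $\Omega$.

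For the regularity I would proceed by a bootstrap. Normalize $u_p>0$ and set $f:=\lambda_1(p)\,p\,u_p$. Since $\Omega$ is a bounded planar domain, $H^2_*(\Omega)\hookrightarrow C^0(\overline\Omega)$, hence $u_p\in L^\infty(\Omega)$ and $f\in L^\infty(\Omega)\subset L^q(\Omega)$ for every $q<\infty$. Feeding this into the $L^q$-regularity theory for the boundary value problem \eqref{weight} (alternatively, estimating term by term the Fourier expansion \eqref{green}, using the uniform bound $0<\phi_m\le\phi_1$ from \eqref{decreasing}) upgrades $u_p$ to $W^{4,q}(\Omega)$ for every $q$ below the exponent dictated by the corner singularities of this mixed hinged/Steklov problem; the two-dimensional embedding $W^{4,q}(\Omega)\hookrightarrow C^{3,\gamma}(\overline\Omega)$, valid for $q>2$ with $\gamma=1-2/q$, then yields $u_p\in C^{3,\gamma}(\overline\Omega)$ for some $\gamma\in(0,1)$.

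Finally, for the Hopf estimate \eqref{hopf} I would note that, with $f$ as above, $u_p$ is a weak solution of the boundary value problem of Corollary \ref{hopfcor} with datum $f\ge0$ and $f\not\equiv0$ (since $\lambda_1(p)>0$, $p\ge\alpha>0$ and $u_p>0$); Corollary \ref{hopfcor} then gives at once $(u_p)_x(0,y)>0$ and $(u_p)_x(\pi,y)<0$ for all $y\in[-\ell,\ell]$, which is \eqref{hopf} (and re-confirms the positivity of $u_p$ up to the edges $y=\pm\ell$). I expect the regularity step to be the only delicate point: one must verify that the corner behaviour of this unusual fourth order mixed problem is mild enough to reach $C^{3,\gamma}$ up to the closed rectangle---which is also why only \emph{some} $\gamma\in(0,1)$, rather than every such $\gamma$, can be claimed. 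The simplicity, sign, and boundary-derivative statements, in contrast, follow rather directly from Proposition \ref{monotonia}, the positivity \eqref{tesi}, and Corollary \ref{hopfcor}.
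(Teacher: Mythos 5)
Your proposal is correct, and for the regularity and Hopf parts it coincides with the paper's argument: the authors also write $f=\lambda_1(p)\,p\,u_p\in L^\infty(\Omega)$, invoke Agmon--Douglis--Nirenberg elliptic regularity (the complementing conditions for this partially hinged problem are verified in \cite[Lemma 4.2]{fergaz}, which is the precise justification your ``corner singularities'' caveat is gesturing at) to get $u_p\in W^{4,q}(\Omega)$ for all $1<q<\infty$, embed into $C^{3,\gamma}(\overline\Omega)$, and then obtain \eqref{hopf} by writing $(u_p)_x(0,y)=\lambda_1(p)\int_\Omega G_x(0,y,\rho,w)\,p\,u_p\,d\rho\,dw$ and using Theorem \ref{greenmonotonia} --- exactly your last step. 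Where you genuinely diverge is the simplicity/one-sign part: the paper uses the dual cone (Moreau) decomposition in $H^2_*(\Omega)$, relying on the positivity preserving property of Corollary \ref{hopfcor} and referring to \cite[Lemma 7.2]{befafega} for the details, whereas you pass to the symmetrized integral operator $Kv=\sqrt{p}\,T(\sqrt{p}\,v)$ on $L^2(\Omega)$ and apply Jentzsch/Krein--Rutman to its a.e.\ positive Hilbert--Schmidt kernel. Both routes ultimately rest on the strict positivity \eqref{tesi} of the Green function; your route has the advantage of delivering simplicity and positivity in one stroke from a classical theorem (and the identification $1/\lambda_1(p)=\|K\|$ you use is in effect the duality principle the paper proves separately as Lemma \ref{dualityprinciple}), while the dual cone argument stays in the energy space and avoids introducing the operator $K$ at all. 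Both are complete proofs.
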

Next we set
\begin{equation}\label{CP}
\lambda_{\alpha,\beta} :=\inf_{p \in P_{\alpha,\beta}} \, \lambda_1(p)\,.
\end{equation}
\begin{definition}\label{def-opt}
	A couple $ (\widehat{p},\widehat u) \in P_{\alpha,\beta} \times H^2_*(\Omega)$ is called {\em optimal pair} if $\widehat p$ achieves the infimum in \eqref{CP} and $\widehat u$ is an eigenfunction associated with $\lambda_1(\widehat p)$ .
\end{definition}

From \cite[Theorem 3.2]{befafega}, suitably combined with Corollary \ref{pppeig}, we have the following:
\begin{proposition}\label{thm-exist-qual}
 \cite{befafega}	There exists and optimal pair $(\widehat{p},\widehat{u}) \in P_{\alpha,\beta} \times H^2_*(\Omega)$ with $\widehat{u}$ positive. Furthermore, 
 	\begin{equation}
	\label{pS}
	\widehat{p}(x,y)=p_{\widehat{u}}(x,y):= \alpha \chi_{S} (x,y)+ \beta \chi_{\Omega \setminus S}(x,y)\,\quad \text{for a.e. }\quad (x,y)\in \Omega\,,
	\end{equation}
	where $\chi_{S}$ and $ \chi_{\Omega \setminus S}$ are the characteristic functions of the sets $S$ and $\Omega \setminus S$; $S\subset \Omega$
	is such that $|S|=\frac{\beta-1}{\beta-\alpha}\,|\Omega|$ and $S := \{ (x,y)\in \Omega\,:\,0<\widehat{u}(x,y) \leq \sqrt{t} \}$ for some $t> 0$.
\end{proposition}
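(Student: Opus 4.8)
The statement combines three ingredients: existence of an optimal pair by the direct method, positivity and regularity of the associated eigenfunction from Corollary \ref{pppeig}, and the bang‑bang structure of the optimal density via the bathtub (rearrangement) principle. I would first handle \emph{existence}. Take a minimizing sequence $p_n\in P_{\alpha,\beta}$ for \eqref{CP} and first eigenfunctions $u_n\in H^2_*(\Omega)$ of \eqref{weight} with $p=p_n$, normalized by $\|\sqrt{p_n}\,u_n\|_{2}=1$, so that $\|u_n\|_{H^2_*}^2=\lambda_1(p_n)\to\lambda_{\alpha,\beta}$. Since $P_{\alpha,\beta}$ is bounded in $L^\infty(\Omega)$ and $(u_n)$ is bounded in $H^2_*(\Omega)$, up to a subsequence $p_n\stackrel{*}{\rightharpoonup}\widehat p$ in $L^\infty(\Omega)$ and $u_n\rightharpoonup\widehat u$ in $H^2_*(\Omega)$, with $u_n\to\widehat u$ strongly in $L^2(\Omega)$ by compact embedding. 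The bounds $\alpha\le p\le\beta$ and the mass constraint $\int_\Omega p=|\Omega|$ pass to the weak-$*$ limit, so $\widehat p\in P_{\alpha,\beta}$; moreover $u_n^2\to\widehat u^2$ in $L^1(\Omega)$ and $p_n\stackrel{*}{\rightharpoonup}\widehat p$ give $\int_\Omega p_n u_n^2\to\int_\Omega\widehat p\,\widehat u^2$, whence $\|\sqrt{\widehat p}\,\widehat u\|_{2}=1$ and $\widehat u\not\equiv0$. Weak lower semicontinuity of $\|\cdot\|_{H^2_*}$ together with \eqref{lambdaP} then yields
\[
\lambda_1(\widehat p)\le\frac{\|\widehat u\|_{H^2_*}^2}{\|\sqrt{\widehat p}\,\widehat u\|_{2}^2}\le\liminf_n\|u_n\|_{H^2_*}^2=\lambda_{\alpha,\beta}\le\lambda_1(\widehat p),
\]
so equality holds throughout, $\widehat p$ realizes the infimum in \eqref{CP}, and $\widehat u$ is an associated first eigenfunction; thus $(\widehat p,\widehat u)$ is an optimal pair. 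By Corollary \ref{pppeig}, $\lambda_1(\widehat p)$ is simple, $\widehat u$ is one‑signed (so we may take $\widehat u>0$ in $\Omega$), and $\widehat u\in C^{3,\gamma}(\overline\Omega)$.

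Next I would \emph{reduce to a rearrangement problem}. With $\widehat u$ fixed, testing the Rayleigh quotient \eqref{lambdaP} shows $\lambda_1(p)\le\|\widehat u\|_{H^2_*}^2/\int_\Omega p\,\widehat u^2$ for every $p\in P_{\alpha,\beta}$, with equality and value $\lambda_{\alpha,\beta}$ for $p=\widehat p$. Hence $\widehat p$ must \emph{maximize} the linear functional $p\mapsto\int_\Omega p\,\widehat u^2\,dx\,dy$ over the convex set $P_{\alpha,\beta}$: any strict competitor would produce a density with $\lambda_1<\lambda_{\alpha,\beta}$. This is precisely the bathtub principle. Since $\widehat u>0$, a maximizer must equal $\beta$ a.e. on the superlevel set $\{\widehat u^2>t\}$ and $\alpha$ a.e. on the sublevel set $\{\widehat u^2<t\}$, where $t$ is the threshold fixed by the mass constraint; because $\alpha<1<\beta$ neither set is negligible, so $0<t<+\infty$, and setting $S:=\{0<\widehat u\le\sqrt t\}$ the balance $\alpha|S|+\beta(|\Omega|-|S|)=|\Omega|$ forces $|S|=\frac{\beta-1}{\beta-\alpha}|\Omega|$.

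The remaining — and only genuinely delicate — point is to show that $\widehat p$ equals $\alpha\chi_S+\beta\chi_{\Omega\setminus S}$ \emph{a.e.} in $\Omega$, i.e. to control $\widehat p$ on the level set $L:=\{\widehat u=\sqrt t\}$, where the bathtub principle leaves it undetermined. The plan is to prove $|L|=0$. By Corollary \ref{pppeig} and interior elliptic regularity applied to $\Delta^2\widehat u=\lambda_1(\widehat p)\,\widehat p\,\widehat u\in L^\infty(\Omega)$, one gets $\widehat u\in W^{4,q}_{loc}(\Omega)$ for every $q<\infty$. Since $\widehat u\equiv\sqrt t$ on $L$, the classical fact that the derivatives of a $W^{1,1}_{loc}$ function vanish a.e. on each of its level sets, iterated up to fourth order (legitimate as $\widehat u\in W^{4,q}_{loc}$), gives $D^\alpha\widehat u=0$ a.e. on $L$ for $1\le|\alpha|\le4$; in particular $\Delta^2\widehat u=0$ a.e. on $L$. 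On the other hand $\Delta^2\widehat u=\lambda_1(\widehat p)\,\widehat p\,\widehat u=\lambda_1(\widehat p)\,\widehat p\,\sqrt t>0$ a.e. on $L$, since $\lambda_1(\widehat p)>0$, $\widehat p\ge\alpha>0$ and $t>0$. Therefore $|L|=0$, so $\{\widehat u^2<t\}$ and $S=\{0<\widehat u\le\sqrt t\}$ agree up to a null set and \eqref{pS} follows. I expect this last step to be the main obstacle: in the fourth‑order setting there is no maximum principle available to exclude a positive‑measure level set, and the argument relies essentially on the $C^{3,\gamma}$‑regularity of $\widehat u$ — ultimately on the sign and structure of the Green function provided by Proposition \ref{monotonia} (and Theorem \ref{greenmonotonia}) through Corollary \ref{pppeig}; absent this, one would only obtain an optimal density that is bang‑bang outside an a priori uncontrolled level set.
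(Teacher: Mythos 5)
Your argument is correct, and it is essentially the expected one: the paper itself offers no proof of this proposition --- it is imported verbatim from \cite{befafega} (``From [Theorem 3.2, befafega], suitably combined with Corollary \ref{pppeig}, we have the following'') --- so the only comparison available is with the ingredients the paper uses elsewhere, and your proof matches them exactly. The reduction of optimality of $\widehat p$ to maximization of the linear functional $p\mapsto\int_\Omega p\,\widehat u^2$ is precisely the content of Lemma \ref{rearrangement} (the bathtub/rearrangement lemma quoted from \cite{befa}), and your treatment of the only delicate point --- that the level set $\{\widehat u=\sqrt t\}$ has measure zero, which you obtain by contrasting $\Delta^2\widehat u=0$ a.e.\ on a level set of a $W^{4,q}$ function with $\Delta^2\widehat u=\lambda_1(\widehat p)\,\widehat p\,\sqrt t>0$ there --- is the same observation the paper makes in step (iv) of its numerical algorithm (``being strictly positive, their level sets must have zero measure''). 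Your direct-method existence step and the use of Corollary \ref{pppeig} for positivity, simplicity and $C^{3,\gamma}$ regularity are likewise sound; I see no gap.
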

%
%

Proposition \ref{thm-exist-qual} gives the useful information that optimal plates, in the sense of Definition \ref{def-opt}, are made by only two materials. However, the region $S$ is given in terms of the optimal eigenfunction $\widehat{u}$ which is not explicitly known, hence, in order to locate the position of the materials, it is important to investigate symmetry and monotonicity properties of $\widehat{u}$. To this aim, we set 
 \begin{equation}\label{iperp}
\mathcal{H}_{\frac{\pi}{2}}:=\bigg\{(x,y)\in\mathbb{R}^2: x\leq \frac{\pi}{2}\bigg\}
 \end{equation}
and we denote by $(\overline x, y)\in \mathbb{R}^2$ the reflection of $(x,y)\in\mathbb{R}^2$ with respect to $\partial\mathcal{H}_{\frac{\pi}{2}}$, i.e. $\overline x=\pi-x$. By exploiting related reflection properties of the Green function, see Lemma \ref{lemma0} in Section \ref{6}, we prove:
\begin{theorem}\label{partialsym}
	Let $(\widehat{p},\widehat{u}) \in P_{\alpha,\beta} \times H^2_*(\Omega)$ be an optimal pair with $\widehat{u}$ positive. Then, one of the following alternative holds:
 \begin{itemize}
	\item[$(i)$] $\widehat{u}(x,y)>\widehat{u}(\overline x,y)$ for all $(x,y) \in (0, \frac{\pi}{2})\times[-\ell,\ell]\,;$
	\item[$(ii)$] $\widehat{u}(x,y)<\widehat{u}(\overline x,y)$ for all $(x,y) \in (0, \frac{\pi}{2})\times[-\ell,\ell]\,;$
	\item[$(iii)$] $\widehat{u} (x,y)= \widehat{u}(\overline x,y)$  for all $(x,y) \in [0,\pi]\times[-\ell,\ell]$. 
	\end{itemize}	
	
\end{theorem}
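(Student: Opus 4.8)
\textbf{Proof proposal for Theorem \ref{partialsym}.}

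The plan is to exploit the duality principle (Lemma \ref{dualityprinciple}) to recast the minimization of $\lambda_1(p)$ over $P_{\alpha,\beta}$ as a maximization problem over an $L^2$-type constraint set, where the objects being rearranged are functions (not $H^2$-functions whose polarizations fall out of $H^2$). Concretely, writing $\widehat u$ for the positive optimal eigenfunction and $f = \widehat p\,\widehat u \in L^2(\Omega)$, one has $\widehat u = \mathcal{G}f$ where $\mathcal{G}$ is the (positivity-preserving, by Proposition \ref{monotonia}) Green operator with kernel $G$, and the optimality of the pair should translate into $f$ maximizing a functional of the form $\int_\Omega \int_\Omega G(x,y,\rho,w) f(x,y) f(\rho,w)$ under the mass/level-set constraints coming from Proposition \ref{thm-exist-qual}. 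The first step is therefore to write down this reduced variational characterization carefully, checking that the class of admissible $f$ is stable under polarization with respect to the plane $\{x=\pi/2\}$ — this is where the bang-bang structure $\widehat p = \alpha\chi_S + \beta\chi_{\Omega\setminus S}$ with $S$ a sublevel set of $\widehat u$ is essential, since polarizing $f$ corresponds to polarizing $\widehat u$, which rearranges the sublevel set $S$ into another admissible set of the same measure.

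The second step is the polarization inequality itself. Let $H = \mathcal{H}_{\pi/2}$ and let $f^H$ denote the polarization of $f$ with respect to $\partial H$. The key reflection property of the Green function proved in Lemma \ref{lemma0} — namely that $G(x,y,\rho,w) \ge G(\bar x, y, \rho, w)$ whenever both $(x,y)$ and $(\rho,w)$ lie in $H$, together with the symmetry $G(x,y,\rho,w)=G(\rho,y,x,w)$ — is exactly the hypothesis needed to run the standard two-point-rearrangement computation and conclude
\[
\int_\Omega\int_\Omega G(x,y,\rho,w)\, f(x,y)\, f(\rho,w)\, \le\, \int_\Omega\int_\Omega G(x,y,\rho,w)\, f^H(x,y)\, f^H(\rho,w)\,,
\]
with equality only when $f$ is ``polarized'' in one of the admissible ways. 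Since $f$ is a maximizer and $f^H$ is admissible, equality must hold, and the equality-discussion in the polarization inequality forces $f$ to satisfy $f = f^H$ or $f = (f\circ\text{refl})^H$ (i.e. $f$ is symmetric after possibly reflecting first). Translating back through $\widehat u = \mathcal{G} f$ and using the \emph{strict} inequality in Lemma \ref{lemma0} on the open set $(0,\pi/2)\times[-\ell,\ell]$ (which in turn rests on the strict positivity \eqref{tesi} and the strict sign of $G_x$ at $x=\pi/2$ from Theorem \ref{greenmonotonia}), one deduces that $\widehat u - \widehat u(\bar\cdot,\cdot)$ cannot change sign on $(0,\pi/2)\times[-\ell,\ell]$: either it is strictly positive there (case $(i)$), strictly negative (case $(ii)$), or identically zero, in which case $\widehat u$ is symmetric on all of $[0,\pi]\times[-\ell,\ell]$ (case $(iii)$).

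The main obstacle, I expect, is the equality-case analysis at the interface between polarization and the Green representation: showing that equality in the polarization inequality, \emph{combined with} the bang-bang/sublevel-set structure of $\widehat p$, really does propagate to a pointwise trichotomy for $\widehat u$ rather than leaving open some intermediate ``partially polarized'' configuration. This requires using that $G$ is a genuine kernel with strict positivity in the interior and strict monotonicity across $\{x=\pi/2\}$ (Theorem \ref{greenmonotonia}), so that $\mathcal{G}f = \mathcal{G}f^H$ forces $f = f^H$ a.e.\ (injectivity-type argument via the strict inequality), and then a strong-maximum-principle surrogate — again furnished by strict positivity of $G$ rather than by any classical maximum principle — to upgrade ``$\widehat u \ge \widehat u(\bar\cdot,\cdot)$'' to the strict statement in $(i)$ unless identity holds everywhere. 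A secondary technical point is verifying that the duality principle genuinely applies at the optimal pair (regularity of $\widehat u$, strictness of the constraint, uniqueness of the eigenfunction up to sign from Corollary \ref{pppeig}), so that the reduced maximizer $f$ is in one-to-one correspondence with $\widehat u$ and the rearrangement of $f$ is faithfully mirrored by the rearrangement of $\widehat u$.
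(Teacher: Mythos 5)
Your proposal follows essentially the same route as the paper: the duality principle (Lemma \ref{dualityprinciple}) to move from $H^2$ to $L^2$ where polarization is available, the reflection inequalities for $G$ from Lemma \ref{lemma0} to obtain the polarization inequality for the kernel quadratic form, the bang-bang structure of $\widehat p$ to make the polarization of $p_u u$ coherent with that of $u$ (this is precisely the identity $[p_u u]_\HH = p_{u_\HH} u_\HH$ of Lemma \ref{lemma00}, and the norm-preservation Lemma \ref{lemma5} is what keeps the dual Rayleigh quotient comparable), and finally the equality-case analysis converted to a pointwise trichotomy via the strict interior positivity of $G(x,y,\rho,w)-G(\overline x,y,\rho,w)$ and the integral representation of $u-u(\overline\cdot,\cdot)$. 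The only places where you remain slightly imprecise are (a) the dual functional is a ratio $\int \cG(pv)\,pv / \|\sqrt{p}v\|_2^2$ rather than a constrained bilinear form, so the preservation of the denominator under polarization needs the separate computation of Lemma \ref{lemma5}; and (b) the equality analysis gives $p_u u=[p_u u]_\HH$ or $p_u(\overline\cdot,\cdot)u(\overline\cdot,\cdot)=[p_u u]_\HH$, and one needs Lemma \ref{lemma10} (again using the sublevel-set structure) to transfer this back to $u=u_\HH$ or $u(\overline\cdot,\cdot)=u_\HH$ before invoking the Green representation to upgrade to strictness — these are exactly the "interface" steps you flagged as the main obstacle, and the paper handles them exactly as you anticipated.
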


In few words, according to Theorem \ref{partialsym}, two situations may occur: either $u_{\widehat p}$ is symmetric w.r.t. $\partial\mathcal{H}_{\frac{\pi}{2}}$, i.e. $(iii)$ occurs, or it is ``concentrated" on one of the two half plates delimited by $\partial\mathcal{H}_{\frac{\pi}{2}}$, i.e. $(i)$ or $(ii)$ occurs. In particular, if $(i)$ occurs then, by symmetry, we can always find an optimal pair such that also $(ii)$ occurs and uniqueness of the optimal pair certainly fails. We notice that uniqueness is even not guaranteed in case $(iii)$ since there could exist many weights symmetric with respect to $x=\frac{\pi}{2}$ and having the form \eqref{pS}; this case could be ruled out by showing very precise monotonicity information about $\widehat{u}$. Unfortunately, the polarization approach adopted in the proof of Theorem \ref{partialsym}, by keeping the reflection plane fixed, nothing says about the monotonicity of $\widehat{u}$; nevertheless, by direct inspection of the representation formula of solutions, we get the following local information.

\begin{proposition}
	\label{lemma2}
Let $(\widehat{p},\widehat{u}) \in P_{\alpha,\beta} \times H^2_*(\Omega)$ be an optimal pair with $\widehat{u}$ positive.Then, $\widehat{u}$ satisfies \eqref{hopf} and one among the following:
	\begin{itemize}
		\item[-] if case $(i)$ of Theorem \ref{partialsym} holds, then $\widehat{u}_x\big(\frac{\pi}{2},y\big)<0$ for all $y\in[-\ell,\ell]$;
		\item[-] if case $(ii)$ of Theorem \ref{partialsym} holds, then $\widehat{u}_x\big(\frac{\pi}{2},y\big)>0$ for all $y\in[-\ell,\ell]$;
		\item[-] if case $(iii)$ of Theorem \ref{partialsym} holds, then $\widehat{u}_x\big(\frac{\pi}{2},y\big)=0$ for all $y\in[-\ell,\ell]$.
	\end{itemize}	
\end{proposition}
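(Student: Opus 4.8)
The plan is to use the Green function representation of $\widehat{u}$ together with the sign and reflection properties of $G_x$ established in Theorem~\ref{greenmonotonia} and Lemma~\ref{lemma0}. Since $(\widehat{p},\widehat{u})$ is an optimal pair with $\widehat{u}$ positive, by Corollary~\ref{pppeig} we know $\widehat{u}\in C^{3,\gamma}(\overline\Omega)$ and, after normalization, $\widehat{u}$ solves $\Delta^2\widehat{u}=\lambda_1(\widehat{p})\,\widehat{p}\,\widehat{u}$ with the boundary conditions of \eqref{weight}. Hence $\widehat{u}$ admits the representation
\begin{equation*}
\widehat{u}(x,y)=\lambda_1(\widehat{p})\int_\Omega G(x,y,\rho,w)\,\widehat{p}(\rho,w)\,\widehat{u}(\rho,w)\,d\rho\,dw\,,
\end{equation*}
and since $G_x\in C^0(\overline\Omega\times\overline\Omega)$ we may differentiate under the integral sign to get
\begin{equation*}
\widehat{u}_x(x,y)=\lambda_1(\widehat{p})\int_\Omega G_x(x,y,\rho,w)\,\widehat{p}(\rho,w)\,\widehat{u}(\rho,w)\,d\rho\,dw\,.
\end{equation*}
The claim \eqref{hopf} then follows immediately by evaluating at $x=0$ and $x=\pi$: by Theorem~\ref{greenmonotonia}, $G_x(0,y,\rho,w)>0$ and $G_x(\pi,y,\rho,w)<0$ for all $\rho\in(0,\pi)$ and all $y,w$, while $\widehat{p}>0$ and $\widehat{u}>0$ on $\Omega$, so the integrals are strictly positive, respectively negative. (Alternatively one invokes Corollary~\ref{hopfcor} directly with $f=\lambda_1(\widehat{p})\,\widehat{p}\,\widehat{u}\ge 0$, $f\not\equiv 0$.)

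For the statement at $x=\pi/2$ the idea is to split the integral over $\Omega$ into the two halves $\Omega^-:=(0,\tfrac{\pi}{2})\times(-\ell,\ell)$ and $\Omega^+:=(\tfrac{\pi}{2},\pi)\times(-\ell,\ell)$, and in the second integral perform the change of variables $\rho\mapsto\overline\rho=\pi-\rho$. Using that $\sin(m(\pi-\rho))=(-1)^{m+1}\sin(m\rho)$ together with the explicit expansion \eqref{green}, one checks the reflection identity $G_x(\tfrac{\pi}{2},y,\overline\rho,w)=-G_x(\tfrac{\pi}{2},y,\rho,w)$ for $\rho\in(0,\tfrac{\pi}{2})$ (this is exactly the content, or an immediate consequence, of Lemma~\ref{lemma0} and the sign information in Theorem~\ref{greenmonotonia}, noting that $G_x(\tfrac{\pi}{2},y,\tfrac{\pi}{2},w)=0$). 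Writing $K(\rho,w):=G_x(\tfrac{\pi}{2},y,\rho,w)$, which by Theorem~\ref{greenmonotonia} satisfies $K<0$ on $\Omega^-$ and $K>0$ on $\Omega^+$, we obtain
\begin{equation*}
\widehat{u}_x\Bigl(\tfrac{\pi}{2},y\Bigr)=\lambda_1(\widehat{p})\int_{\Omega^-}K(\rho,w)\,\Bigl[\widehat{p}(\rho,w)\widehat{u}(\rho,w)-\widehat{p}(\overline\rho,w)\widehat{u}(\overline\rho,w)\Bigr]d\rho\,dw\,.
\end{equation*}

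It remains to determine the sign of the bracket. Here I would use Proposition~\ref{thm-exist-qual}, which gives $\widehat{p}=p_{\widehat{u}}=\alpha\chi_S+\beta\chi_{\Omega\setminus S}$ with $S=\{0<\widehat{u}\le\sqrt{t}\}$: thus $\widehat{p}(\rho,w)\widehat{u}(\rho,w)$ is a nondecreasing function of $\widehat{u}(\rho,w)$ (the map $s\mapsto \alpha s$ on $(0,\sqrt t]$ followed by $s\mapsto\beta s$ on $(\sqrt t,\infty)$ is monotone nondecreasing in $s$, since $\alpha<\beta$). Consequently, in case $(i)$ of Theorem~\ref{partialsym}, where $\widehat{u}(\rho,w)>\widehat{u}(\overline\rho,w)$ on $\Omega^-$, the bracket is $\ge 0$ and is $>0$ on a set of positive measure, so the integrand $K\cdot[\,\cdot\,]$ is $\le 0$ and $<0$ on a positive measure set, giving $\widehat{u}_x(\tfrac{\pi}{2},y)<0$; case $(ii)$ is identical with reversed inequalities; in case $(iii)$ the bracket vanishes identically, hence $\widehat{u}_x(\tfrac{\pi}{2},y)=0$. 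The main obstacle is the verification of the reflection identity $K(\overline\rho,w)=-K(\rho,w)$ — i.e.\ extracting from Lemma~\ref{lemma0} the precise antisymmetry of $G_x(\pi/2,\cdot)$ under $\rho\mapsto\pi-\rho$ — and ensuring that the differentiation under the integral and the term-by-term manipulation of the Fourier series are legitimate, which rests on the uniform bounds on $\phi_m$ and the $m^{-3}$ (resp.\ $m^{-2}$ after differentiation) decay together with $|{\sin(m\rho)}|,|{\cos(mx)}|\le 1$; once absolute convergence is in hand, every formal step above is justified.
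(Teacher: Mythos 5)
Your proposal is correct and follows essentially the same route as the paper: differentiate the Green representation, use the antisymmetry $G_x(\tfrac{\pi}{2},y,\overline\rho,w)=-G_x(\tfrac{\pi}{2},y,\rho,w)$ to fold the integral onto $(0,\tfrac{\pi}{2})\times(-\ell,\ell)$, and conclude from the sign of $G_x(\tfrac{\pi}{2},\cdot)$ in Theorem \ref{greenmonotonia} together with the fact that $s\mapsto p_{\widehat u}s$ is increasing (the paper cites its inequalities \eqref{eqiii1}--\eqref{eqiii} from the proof of Theorem \ref{partialsym} for exactly this point, and obtains \eqref{hopf} as you do via Corollary \ref{pppeig}). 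The only cosmetic remark is that the antisymmetry of $G_x(\tfrac{\pi}{2},\cdot)$ comes from the direct Fourier computation you give, not from Lemma \ref{lemma0} (which concerns $G$ itself), but you supply that computation, so nothing is missing.
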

For what so far stated, piecewise constant densities symmetric with respect to $x=\frac{\pi}{2}$ and with the denser material $\beta$ located near this line are among the candidates for being optimal in the sense of Definition \ref{def-opt}. Nevertheless, due to the high complexity of the analytic expression of the coefficients in \eqref{green}, a theoretical proof of their optimality seems out of reach by means of our techniques; this issue is instead supported by the numerical results we provide in Section \ref{num1}.

We conclude the section by pointing out that, even in the second order case, the picture of results about symmetry and monotonicity properties of minimizers of Poincar\'e inequalities on rectangular domains is far from being complete, when mixed boundary conditions are dealt with. See e.g. \cite[Section 6]{arioli} where the authors left as on open problem the one dimensionality of extremals for certain Poincar\'e inequalities arising when dealing with the stationary Navier-Stokes equation in a square, under mixed Dirichlet-Neumann boundary conditions. For results in this direction, but under Neumann boundary conditions, we refer the interested reader to \cite[Chapter II.5]{kaw}, \cite{naz} and references therein.

\section{Numerical results}\label{num1} 
In this section we illustrate some numerical results which complete the statements of Theorem \ref{partialsym} and Proposition \ref{lemma2}.

\subsection{Numerical algorithm to solve \eq{CP}} In order to find an optimal weight, we adopt an algorithm based on the following rearrangement lemma.

\begin{lemma}\label{rearrangement} \cite[Lemma 5.4]{befa}
	Let $u\in H^2_*(\Omega)$ be strictly positive in $\Omega$. Then, the problem
	$$M_{\alpha, \beta}:=\sup_{p\in P_{\alpha,\beta} } \int_{\Omega} p(x,y)u^2\,dx\,dy$$
	admits the solution $
	p_u(x,y) = \alpha \chi_{S} (x,y)+ \beta \chi_{\Omega \setminus S}(x,y)$ for a.e. $(x,y)\in \Omega$,
	where $S=S(u) \subset \Omega$
	is such that $|S|=\frac{\beta-1}{\beta-\alpha}\,|\Omega|$. Moreover, set
	$$
	t:= \sup \left\{ s > 0 : |\{(x,y)\in \Omega\,: 0<u(x,y) \leq \sqrt{s} \}| < \frac{\beta-1}{\beta-\alpha}\,|\Omega|\right\},
	$$
	we have that
	$
	 \{(x,y)\in \Omega\,:0<u(x,y) <  \sqrt{t} \} \subseteq S \subseteq \{(x,y)\in \Omega\,: 0<u(x,y)\leq  \sqrt{t}\}\,.
	$

\end{lemma}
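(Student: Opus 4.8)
The plan is to reduce the constrained maximization $M_{\alpha,\beta}$ to the classical bathtub principle. Put $w:=u^2$; since $H^2_*(\Omega)$ embeds into $C^0(\overline\Omega)$ and $u>0$ in $\Omega$, the function $w$ is continuous and strictly positive in $\Omega$, in particular $w\in L^\infty(\Omega)$. I would then write every $p\in P_{\alpha,\beta}$ as $p=\alpha+(\beta-\alpha)q$ with $0\le q\le 1$ a.e.: the mass constraint $\int_\Omega p=|\Omega|$ becomes the linear constraint $\int_\Omega q\,dx\,dy=c$ with $c:=\frac{1-\alpha}{\beta-\alpha}|\Omega|$, while $\int_\Omega p\,w=\alpha\int_\Omega w+(\beta-\alpha)\int_\Omega q\,w$. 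Hence the problem is equivalent to maximizing $\int_\Omega q\,w$ over $Q:=\{q\in L^\infty(\Omega):0\le q\le 1,\ \int_\Omega q=c\}$. Existence of a maximizer is then immediate, since $Q$ is convex, bounded and weak-$*$ closed in $L^\infty(\Omega)$, hence weak-$*$ compact, and $q\mapsto\int_\Omega q\,w$ is weak-$*$ continuous because $w\in L^1(\Omega)$.

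The heart of the argument is to exhibit a bang-bang maximizer and to pin down its level. Let $t$ be the threshold from the statement; using that $\{(x,y)\in\Omega:0<u\le\sqrt s\}=\{w\le s\}$ inside $\Omega$, the number $t$ is exactly the level at which the non-decreasing, right-continuous distribution function $s\mapsto|\{w\le s\}|$ crosses $\frac{\beta-1}{\beta-\alpha}|\Omega|$, so that
\begin{equation*}
|\{w<t\}|\ \le\ \tfrac{\beta-1}{\beta-\alpha}|\Omega|\ \le\ |\{w\le t\}|\,.
\end{equation*}
These two inequalities guarantee that one may select a measurable set $B\subseteq\{w=t\}$ with $|B|=c-|\{w>t\}|\in[0,|\{w=t\}|]$; I would then set $q^*:=\chi_{\{w>t\}\cup B}\in Q$ and $S:=\Omega\setminus(\{w>t\}\cup B)=\{w<t\}\cup(\{w=t\}\setminus B)$, so that $\{0<u<\sqrt t\}\subseteq S\subseteq\{0<u\le\sqrt t\}$ and $|S|=\frac{\beta-1}{\beta-\alpha}|\Omega|$. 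For any competitor $q\in Q$ one has the elementary identity
\begin{equation*}
\int_\Omega(q^*-q)\,w\,dx\,dy=\int_\Omega(q^*-q)(w-t)\,dx\,dy+t\int_\Omega(q^*-q)\,dx\,dy,
\end{equation*}
where the last term vanishes since $q,q^*\in Q$; moreover $(q^*-q)(w-t)\ge 0$ pointwise, because on $\{w>t\}$ one has $q^*=1\ge q$, on $\{w<t\}$ one has $q^*=0\le q$, and on $\{w=t\}$ the factor $w-t$ vanishes. Hence $\int_\Omega q^*w\ge\int_\Omega q\,w$, i.e. $q^*$ is optimal, and translating back through $p=\alpha+(\beta-\alpha)q$ yields $p_u=\alpha\chi_S+\beta\chi_{\Omega\setminus S}$ with all the asserted properties.

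The only genuinely delicate point is the bookkeeping on the level set $\{w=t\}=\{u=\sqrt t\}$, which may carry positive measure: this is precisely why $t$ must be introduced through the supremum prescription rather than through an equality of distribution functions, and why $S$ can only be sandwiched between the open and closed sublevel sets of $u$ (and, correspondingly, why uniqueness of the optimal weight may fail when $|\{u=\sqrt t\}|>0$). Everything else — the reduction to $Q$, the compactness/continuity argument for existence, and the rearrangement inequality above — is routine once one knows that $w\in C^0(\overline\Omega)$ and $w>0$ in $\Omega$.
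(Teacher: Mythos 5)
Your proof is correct and self-contained. Note that the paper itself does not prove this lemma but simply cites it from [befa, Lemma~5.4]; your argument is the standard bathtub/rearrangement principle, which is exactly the expected route: linearize via $p=\alpha+(\beta-\alpha)q$, observe that the distribution function $s\mapsto|\{w\le s\}|$ is right-continuous so the threshold $t$ straddles $\frac{\beta-1}{\beta-\alpha}|\Omega|$, pick the bang-bang competitor $q^*=\chi_{\{w>t\}\cup B}$, and verify optimality from the pointwise sign of $(q^*-q)(w-t)$ together with the mass constraint. The bookkeeping on $\{w=t\}$ (non-atomicity of Lebesgue measure to carve out $B$, and the resulting sandwich $\{0<u<\sqrt t\}\subseteq S\subseteq\{0<u\le\sqrt t\}$) is handled correctly.
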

To solve \eqref{CP} we run the numerical scheme below adjusted from \cite{befa}, see also \cite{chen} where the algorithm was proposed for the clamped and simply supported problems and \cite{chanillo} for related numerical results in the second order case.
\begin{itemize}
	\item[(i)] We solve numerically \eqref{weight} with an arbitrary weight $p^{(i)}$ and we determine the corresponding first eigenvalue $\lambda^{(i)}_1$ and the first eigenfunction $u^{(i)}$. 
	\item[(ii)] We compute numerically $t^{(i)}>0$ such that $|S^{(i)}|=| \{(x,y)\in \Omega\,:0<u^{(i)}(x,y)\leq\sqrt{ t^{(i)}}\}|=\frac{\beta-1}{\beta-\alpha}|\Omega|$ and we define the weight $$p^{(i+1)}:=p_{u^{(i)}}=\alpha \chi_{S^{(i)}} (x,y)+ \beta \chi_{\Omega \setminus S^{(i)}}(x,y).$$
	\item[(iii)] We solve numerically \eqref{weight} with the weight $p^{(i+1)}$ and we determine the corresponding first eigenvalue $\lambda^{(i+1)}_1$ and the first eigenfunction $u^{(i+1)}$. 
	\item[(iv)] Thanks to Lemma \ref{rearrangement} we get 
	\begin{equation*}\label{algoritmo}
	\|\sqrt{p^{(i+1)}}u^{(i)}\|_2^2\geq \|\sqrt{p^{(i)}}u^{(i)}\|_2^2.
	\end{equation*}
	Notice that we can apply Lemma \ref{rearrangement} with $S=S^{(i)}$ as in step (ii) since the $u^{(i)}$ solve the equation in \eqref{weight} a.e. hence, being strictly positive, their level sets must have zero measure.
	\item[(v)] We use the characterization \eqref{lambdaP}
	\small	\begin{equation*}
	\lambda^{(i+1)}_1= \min_{\substack{u \in H^2_*(\Omega)\setminus\{0\}}}
	\frac{\|u\|_{H^2_*}^2}{\|\sqrt{p^{(i+1)}}\,u\|_{2}^2}=\frac{\|u^{(i+1)}\|_{H^2_*}^2}{\|\sqrt{p^{(i+1)}}\,u^{(i+1)}\|_{2}^2}\leq \frac{\|u^{(i)}\|_{H^2_*}^2}{\|\sqrt{p^{(i+1)}}\,u^{(i)}\|_{2}^2}\leq \frac{\|u^{(i)}\|_{H^2_*}^2}{\|\sqrt{p^{(i)}}\,u^{(i)}\|_{2}^2}=\lambda^{(i)}_1.
	\end{equation*}
	\normalsize
	\item[(vi)]  Iterating the procedure, we obtain a non increasing sequence $i\mapsto\lambda_1^{(i)}$ bounded from below by $\lambda_{\alpha,\beta}$, so that the convergence of the algorithm to a certain $\overline\lambda_1\geq \lambda_{\alpha,\beta}$ is assured.
\end{itemize}
The only drawback of this algorithm is that we do not know a priori whether $\overline \lambda_1= \lambda_{\alpha,\beta}$; from a numerical point of view the problem may be circumvented by repeating the procedure with several different initial weights and noticing that we always get the same limit. 
To find the approximate solution of \eqref{weight}, for a given weight, we expand the solutions in Fourier series, adopting as orthonormal basis of $L^2(\Omega)$ the explicit eigenfunctions of \eqref{weight} with $p\equiv1$, known from \cite{fergaz}; in order to get a numerical approximation, we truncate the series at a certain $N \in \N_{+}$ and we solve a linear system of $2N$ equations where the unknowns are the Fourier coefficients, see \cite{befa} for the details.

\subsection{Conclusions }

\begin{figure}[!hbt]
	\centering
	{\includegraphics[width=13cm]{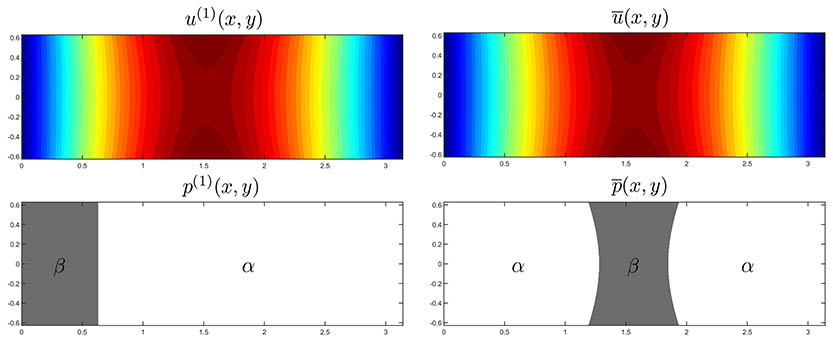}}
	\caption{The level sets of $u^{(1)}$ (left) and $\overline u$ (right) corresponding respectively to the densities $p^{(1)}(x,y)$ and $\overline p(x,y)$. We assume $N=20$ and \eqref{parameter}.}
	\label{fig}
\end{figure}

	\begin{figure}[!hbt]
	\centering
	{\includegraphics[width=10cm]{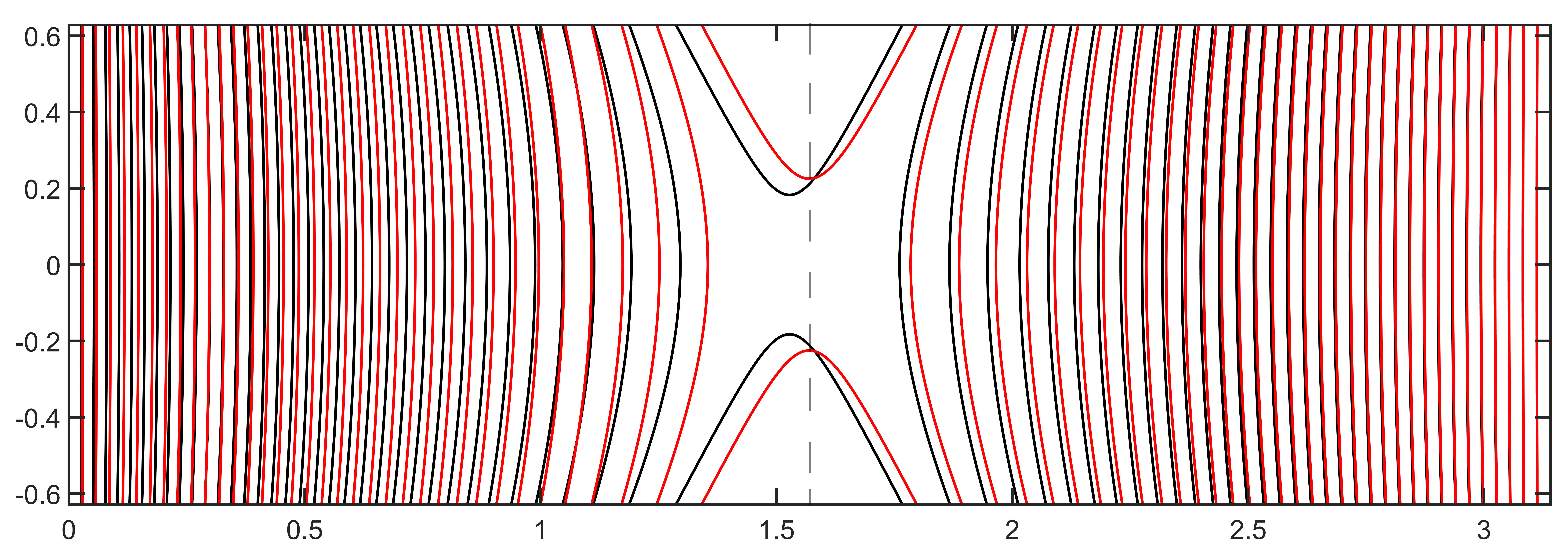}}
	\caption{A comparison betwen the level sets of $u^{(1)}$ (black) and $\overline u$ (red) corresponding respectively to the densities $p^{(1)}$ and $\overline p$}.
	\label{fig2}
\end{figure}

 In Figures \ref{fig} and \ref{fig2} we show some of our results on a plate having the following features:
\begin{equation}\label{parameter}
\sigma=0.2\,,\qquad \ell=\dfrac{\pi}{5}\,,\qquad\alpha=0.5\,,\qquad \beta=6\alpha.
\end{equation}
More precisely, in Figure \ref{fig} we compare the level sets of $u^{(1)}$ (left) and of $\overline u$ (right) corresponding, respectively, to the  weights $p^{(1)}$ and $\overline p$ plotted below, i.e. the initial and the last weight of our algorithm: even if $p^{(1)}$ and $u^{(1)}$ are not $\pi/2$-symmetric, $\overline p$ and $\overline u$ are $\pi/2$-symmetric. A comparison between the level sets of $u^{(1)}$ (black) and $\overline u$ (red) is given in Figure \ref{fig2}. We triggered the algorithm with very different initial weights $p^{(1)}$ either symmetric or not; after some iterations, we always get that the procedure converges to the same density $\overline p$ symmetric with respect to $x=\pi/2$. Furthermore, we repeated the experiments reducing the width of the plate $\ell$ towards choices consistent with common bridge design, e.g. $\ell=\frac{\pi}{150}$. In any case, we recorded the same kind of results which suggest to locate the denser material in the middle of the plate; we point out that by reducing $\ell$ we find weights $\overline p$ equals to $\beta$ on a region which is approximately a rectangle centered at $\pi/2$. \par
\smallskip\par

In conclusion, the observed results lead us to state the following conjecture:
\begin{center}
	\textbf{there exists an optimal pair $(\widehat p, \widehat u)$ of \eqref{CP} with $\widehat u$ symmetric w.r.t. $x=\pi/2$ and s.t.}
\end{center}
\begin{equation*}
\begin{split}
	&\widehat{u}_x (x,y)>0\quad \forall (x,y)\in \bigg(0,\dfrac{\pi}{2}\bigg)\times[-\ell,\ell]\,;\quad \widehat{u}_x (x,y)<0\quad \forall (x,y)\in \bigg(\dfrac{\pi}{2},\pi\bigg)\times[-\ell,\ell]\,;\\
	&\widehat{u}_y (x,y)>0\quad  \forall (x,y)\in (0,\pi)\times(0,\ell]\,;\quad \widehat{u}_y (x,y)<0\quad \forall (x,y)\in (0,\pi)\times[-\ell,0)\,.
\end{split}
\end{equation*}
Clearly, if the above conjecture holds, by taking $\widehat{p}=p_{\widehat{u}}$ as given in \eqref{pS}, we get that the corresponding optimal weight is symmetric w.r.t. $x=\pi/2$ and it is equal to $\beta$ in the central part of the plate.

\section{Proof of Theorem \ref{greenmonotonia}}\label{proof}
We define the series
\begin{equation}\label{seriedef}
\mathcal{S}_1(z):=\sum_{m=1}^{+\infty} \dfrac{\phi_m}{m^2}\sin(mz)\qquad\text{and}\qquad \mathcal{S}_2(z):=\sum_{m=1}^{+\infty} (-1)^m\dfrac{\phi_m}{m^2}\sin(mz)\,,
\end{equation}
with the $\phi_m$ as in \eqref{green}, and we state two preliminary lemmas. Notice that we neglect the dependence on $y,w$ since it does not play a role in the proofs. 
\begin{lemma}\label{lemma4}
	Let the series  $\mathcal{S}_1(z)$  be as in \eqref{seriedef}; then
	\begin{equation}\label{S1}
	\mathcal{S}_1(z)>0\qquad\forall z \in (0,\pi)\,.
	\end{equation}
\end{lemma}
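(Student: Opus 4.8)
The plan is to deduce \eqref{S1} from the classical Fej\'er--Jackson inequality by a single summation by parts, exploiting the strict monotonicity of the $\phi_m$. First I would isolate the right structure: since by \eqref{decreasing} we have $0<\phi_{m+1}<\phi_m$, the sequence $a_m:=\phi_m/m$ satisfies $a_{m+1}=\tfrac{\phi_{m+1}}{m+1}<\tfrac{\phi_m}{m}=a_m$ for every $m\ge 1$, and $a_m\to 0$ because $(\phi_m)$ is bounded (again by \eqref{decreasing}). Rewriting the generic term of the series as
\[
\frac{\phi_m}{m^2}\sin(mz)=a_m\cdot\frac{\sin(mz)}{m}
\]
exhibits exactly the form on which Abel summation against a nonnegative partial-sum sequence is effective.

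The external ingredient is the Fej\'er--Jackson inequality: for every $m\ge 1$ and every $z\in(0,\pi)$,
\[
F_m(z):=\sum_{j=1}^m\frac{\sin(jz)}{j}>0 .
\]
I would simply invoke this as classical; if a self-contained statement is wanted, it follows from $F_m'(z)=\tfrac12\bigl(\tfrac{\sin((m+1/2)z)}{\sin(z/2)}-1\bigr)$ together with $F_m(\pi)>0$ and a short analysis of the critical points, and can be inserted as an auxiliary lemma. I also recall that for fixed $z\in(0,\pi)$ the sequence $\bigl(F_m(z)\bigr)_m$ converges (to the Clausen value), hence $\sup_m|F_m(z)|<\infty$.

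With this in hand the argument is short. Summation by parts gives, for every $N$,
\[
\sum_{m=1}^{N} a_m\,\frac{\sin(mz)}{m}=\sum_{m=1}^{N-1}(a_m-a_{m+1})\,F_m(z)+a_N\,F_N(z).
\]
Since $a_N\to 0$ while $\bigl(F_N(z)\bigr)_N$ is bounded for fixed $z\in(0,\pi)$, the boundary term $a_N F_N(z)$ tends to $0$; moreover $\sum_{m\ge1}(a_m-a_{m+1})=a_1<\infty$ and $(F_m(z))_m$ is bounded, so the series on the right converges absolutely. Letting $N\to\infty$ yields
\[
\mathcal{S}_1(z)=\sum_{m=1}^{+\infty}(a_m-a_{m+1})\,F_m(z)\qquad\forall z\in(0,\pi),
\]
and here every summand is strictly positive on $(0,\pi)$: the factor $a_m-a_{m+1}$ is $>0$ by the strict monotonicity coming from \eqref{decreasing}, and $F_m(z)>0$ by Fej\'er--Jackson. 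Hence $\mathcal{S}_1(z)>0$ for all $z\in(0,\pi)$, which is \eqref{S1}.

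The only delicate point is the passage to the limit in the summation by parts — controlling the boundary term and the tail of the series — but this is routine given the uniform boundedness of the partial sums $F_N(z)$ for $z$ bounded away from $0$ and $\pi$; no genuine obstacle arises. If a completely elementary treatment is required, the sole nontrivial step is the proof of the Fej\'er--Jackson inequality itself.
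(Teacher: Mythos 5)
Your proof is correct, and it is genuinely different from — and substantially shorter than — the paper's argument. The paper proves $\mathcal{S}_1(z)>0$ by a three-step case analysis: after bounding the tail $\sum_{m\ge 2}\phi_m m^{-2}\sin(mz)$ by $\phi_1(\pi^2/6-1)$ to handle the central interval $[\pi/4,3\pi/4]$, it covers $(0,3\pi/4]$ via the elementary inequality $\sin(mz)>\sin z$ for $z\in(0,\pi/(N+1))$ together with a telescoping union of intervals $[\pi/(N+2),\pi/(N+1))$, and then treats $(3\pi/4,\pi)$ separately by pairing consecutive terms, using the ad hoc estimate of Lemma \ref{lemmasin2} in the Appendix and yet another interval decomposition. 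Your route replaces all of this by a single Abel summation against the partial sums $F_m(z)=\sum_{j=1}^m \sin(jz)/j$, using that $a_m=\phi_m/m$ is strictly decreasing to $0$ (immediate from \eqref{decreasing}, since $\phi_{m+1}/(m+1)<\phi_m/(m+1)<\phi_m/m$) and invoking the Fej\'er--Jackson inequality $F_m(z)>0$ on $(0,\pi)$. The identity
\[
\sum_{m=1}^N a_m\frac{\sin(mz)}{m}=\sum_{m=1}^{N-1}(a_m-a_{m+1})F_m(z)+a_N F_N(z)
\]
is exact, the boundary term vanishes as $N\to\infty$ because $a_N\to 0$ and $F_N(z)$ is bounded for each fixed $z$, the remaining series converges absolutely since $\sum(a_m-a_{m+1})=a_1$, and every surviving summand is strictly positive. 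What you buy is brevity and conceptual clarity, at the price of importing Fej\'er--Jackson as an external classical result; what the paper buys is a fully self-contained elementary proof relying only on \eqref{decreasing} and basic properties of the sine, at the price of a much longer case analysis and an auxiliary Appendix lemma. Both are valid; note also that your method immediately yields Lemma \ref{lemma42} and the $\phi_{2k}$-series appearing at the end of the proof of Theorem \ref{greenmonotonia} by the same one-line argument, which the paper handles by repeating the whole machinery with minor changes.
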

\begin{proof}
We split the proof into three steps.\par \smallskip \par

\textbf{Step 1.}	
 Thanks to Theorem \ref{monotonia}-\eqref{decreasing} we know that $
0<\phi_{m}<\phi_1$ $\forall m>1$,
then we obtain

\begin{equation*}\label{dis1}
\bigg|\sum_{m=2}^\infty\dfrac{\phi_m}{m^2}\sin(mz)\bigg|\leq \phi_1\sum_{m=2}^\infty\dfrac{|\sin(mz)|}{m^2}\leq \phi_1 \sum_{m=2}^\infty\dfrac{1}{m^2}=\phi_1\bigg[\frac{\pi^2}{6}-1\bigg]\qquad \forall z\in(0,\pi)
\end{equation*}
and, in turn, that
\begin{equation}\label{ppp centrale2}
\mathcal{S}_1(z)\geq \phi_1\left[\sin(z)-\bigg(\frac{\pi^2}{6}-1\bigg)\right]\qquad \forall z\in(0,\pi).
\end{equation}
Since $\arcsin\big(\frac{\pi^2}{6}-1\big)<\frac{\pi}{4}$,
 through \eqref{ppp centrale2} we have 
\begin{equation}\label{eq11}
\mathcal{S}_1(z)>0\qquad\forall z\in\bigg[\dfrac{\pi}{4},\dfrac{3}{4}\pi\bigg].
\end{equation}

\textbf{Step 2.}
 We fix $N\geq 3$ and write $\mathcal{S}_1(z)=
\sum_{m=1}^N  \dfrac{\phi_m}{m^2}\sin(mz)+\sum_{m=N+1}^\infty  \dfrac{\phi_m}{m^2}\sin(mz)\,.
$
Then, we exploit the elementary inequality 
$
\sin(mz)>\sin(z)$, $\forall z\in\big(0,\frac{\pi}{N+1}\big)$ and $\forall m=2,\dots,N$ (see \cite[Lemma 6.3]{ppp} for a proof) and Theorem \ref{monotonia}-\eqref{decreasing} to get
\begin{equation}\label{dise2}
\sum_{m=1}^N  \dfrac{\phi_m}{m^2}\sin(mz)> \phi_N\sin(z) \sum_{m=1}^N  \dfrac{1}{m^2}\qquad \forall z\in\bigg(0,\dfrac{\pi}{N+1}\bigg).
\end{equation}
On the other hand, through Theorem \ref{monotonia}-\eqref{decreasing}, we get 
\begin{equation}\label{dise222}
\bigg|\sum_{m=N+1}^\infty  \dfrac{\phi_m}{m^2}\sin(mz)\bigg|\leq \sum_{m=N+1}^\infty  \dfrac{\phi_m}{m^2}|\sin(mz)|\leq \phi_{N}\sum_{m=N+1}^\infty  \dfrac{1}{m^2}.
\end{equation}
By combining \eqref{dise2} and \eqref{dise222} we infer 
\begin{equation}\label{ineq22}
\begin{split}
\mathcal{S}_1(z)&\geq\phi_N\left(\sum_{m=1}^N  \dfrac{1}{m^2}\right)\, \left[\sin z-C_N \right]\qquad \forall z\in \bigg(0,\dfrac{\pi}{N+1}\bigg)\,,
\end{split}
\end{equation}
where $$
C_N:=\left(\sum\limits_{m=N+1}^\infty  \frac{1}{m^2}\right) \left(\sum\limits_{m=1}^N  \frac{1}{m^2}\right)^{-1}\,.$$ Next we denote by $z_N$ the unique solution to the equation:
\begin{equation*}\label{sin_ineq2}
\sin( z)=\overline C_N\qquad z\in(0,\pi/2)\,;
\end{equation*}
the above definition makes sense for all $N\geq 1$ since the map $N\mapsto C_N$ is positive, strictly decreasing and $0<C_N<1$. We prove that
\begin{equation}\label{ts0}
z_N< \dfrac{\pi}{N+2}\qquad \forall N\geq 3\,.
\end{equation}
When $N=3$, $z_3\approx0.21<\frac{\pi}{5}$ and \eqref{ts0} follows. We complete the proof of \eqref{ts0} by showing that
\begin{equation}\label{ts1}
C_N< \sin\bigg(\dfrac{\pi}{N+2}\bigg)\qquad \forall N\geq 4\,.
\end{equation}
To this purpose we notice that $\sum\limits_{m=1}^N  \frac{1}{m^2}> 1$ and $\sum\limits_{m=N+1}^\infty  \frac{1}{m^2}< \int_N^\infty\frac{1}{x^2}\,dx=\frac{1}{N}$, implying that
\begin{equation}\label{CN}
C_N< \dfrac{1}{N}\qquad \forall N\geq 2.
\end{equation}
To tackle \eq{ts1} we use the estimate:
\begin{equation}\label{stimaseno}	
\sin(x)\geq\dfrac{3}{\pi}x\qquad \forall x \in\bigg(0,\dfrac{\pi}{6}\bigg].
\end{equation}
Combining this with \eq{CN}, \eq{ts1} follows by noticing that
$
\dfrac{1}{N}<\dfrac{3}{N+2}$ for all $N\geq 4$. Finally, in view of \eq{ts0}, we get 
\begin{equation}\label{eq12}
\mathcal{S}_1(z)>0\qquad\forall z\in\bigg[\dfrac{\pi}{N+2},\dfrac{\pi}{N+1}\bigg)\qquad \forall N\geq 3.
\end{equation}
Hence, by combining \eqref{eq11} with \eqref{eq12} written for all $3\leq N \leq \overline N$, we obtain 
$
\mathcal{S}_1(z)>0$ $\forall z\in\big[\frac{\pi}{\overline N+2},\frac{3}{4}\pi\big]$ and passing to the limit as  $\overline N\rightarrow+\infty$ we conclude that
\begin{equation}\label{eq19}
\mathcal{S}_1(z)>0\qquad\forall z\in\bigg(0,\dfrac{3}{4}\pi\bigg].
\end{equation}

\textbf{Step 3.}
It remains to consider $z\in\big(\frac{3}{4}\pi,\pi\big)$.
For $N\geq3$, odd integer, we set $\overline z=\pi-z$ and we rewrite the series as
\begin{equation}\label{S1overz}
\begin{split}
\mathcal{S}_1(\overline z)=\sum_{\substack{
		m=1\\
		\text{odd}}}^N\bigg[ \dfrac{\phi_m}{m^2}\sin(m\overline z)-\dfrac{\phi_{m+1}}{(m+1)^2}\sin[(m+1)\overline z]\bigg]+\sum_{m=N+2}^\infty (-1)^{m+1}\dfrac{\phi_m}{m^2}\sin(m\overline z)\quad\forall \overline{z}\in\bigg(0,\frac{\pi}{4}\bigg).
\end{split}
\end{equation}
By Theorem \ref{monotonia} we know that $\phi_m>0$ and it strictly decreasing with respect to $m\in\mathbb{N}^+$ for all $y,w\in[-\ell,\ell]$; hence the following estimate holds:
\begin{equation}\label{phi1}
\begin{split}
\phi_1 \sin(\overline z)-\dfrac{\phi_2}{2^3}\sin(2\overline z)=\sin(\overline z)\bigg[\phi_1 -\dfrac{\phi_2}{2^2}\cos(\overline z)\bigg]>\dfrac{3}{4}\phi_2\sin(\overline z)>\dfrac{3}{4}\phi_{N+1}\sin(\overline z)\quad \forall \overline z \in\bigg(0,\frac{\pi}{2}\bigg)\,.
\end{split}
\end{equation}
Next, by exploiting the inequality 
\begin{equation*}\label{s}
\begin{split}
\dfrac{\sin(m\overline z)}{m^2}-\dfrac{\sin[(m+1)\overline z]}{(m+1)^{2}}>\sin(\overline z)\bigg[\dfrac{1}{m}-\dfrac{1}{m+1}\bigg]^2\quad \forall \overline z\in\bigg(0,\dfrac{\pi}{N+1}\bigg), \,\,\forall m=3,\dots,N,
\end{split}
\end{equation*}
(see Lemma \ref{lemmasin2} in the Appendix for a proof) and \eq{phi1}, we get 
\begin{equation}\label{dise4}
\begin{split}
\sum_{\substack{
		m=1\\
		\text{odd}}}^N&\bigg[ \dfrac{\phi_m}{m^2}\sin(m\overline z)-\dfrac{\phi_{m+1}}{(m+1)^2}\sin[(m+1)\overline z]\bigg]> \phi_{N+1}\sin(\overline z) \bigg[\dfrac{3}{4}+\sum_{\substack{
		m=3\\
		\text{odd}}}^N \bigg(\dfrac{1}{m}-\dfrac{1}{m+1}\bigg)^2\bigg]\,,
\end{split}
\end{equation}
for all $\overline z\in\bigg(0,\dfrac{\pi}{N+1}\bigg)$. On the other hand, through the monotonicity of the $\phi_m$, we get
\begin{equation}\label{dise5}
\bigg|\sum_{m=N+2}^\infty  (-1)^{m+1}\dfrac{\phi_m}{m^2}\sin(m\overline z)\bigg|\leq \phi_{N+1}\sum_{m=N+2}^\infty  \dfrac{1}{m^2}\quad \forall \overline z\in(0,\pi),\,\,\forall N\geq 3.
\end{equation}
From \eqref{dise4}-\eqref{dise5}, for all $N\geq 3$ odd, we infer 
\begin{equation*}\label{aim2}
\begin{split}
\mathcal{S}_1(\overline z)\geq 
\phi_{N+1}\,\bigg[\dfrac{3}{4}+\sum_{\substack{
		m=3\\
		\text{odd}}}^N &\bigg(\dfrac{1}{m}-\dfrac{1}{m+1}\bigg)^2\bigg]\,( \sin(\overline z)-\overline C_N) \quad \forall \overline z\in\bigg(0,\dfrac{\pi}{N+1}\bigg)\,,
\end{split}
\end{equation*}
where $$\overline C_N:=\left(\sum\limits_{m=N+2}^\infty  \dfrac{1}{m^2}\right)\left(\dfrac{3}{4}+\sum\limits_{\substack{
			m=3\\
			\text{odd}}}^N \bigg[\dfrac{1}{m}-\dfrac{1}{m+1}\bigg]^2\right)^{-1}\,. $$
Next we denote by $\overline z_N$ the unique solution to the equation 
\begin{equation*}\label{sin_ineq2}
\sin( z)=\overline C_N\qquad z\in(0,\pi/2).
\end{equation*} The above definition makes sense for all $N\geq 3$, odd, since the map $N\mapsto \overline C_N$ is positive, strictly decreasing and $0<\overline C_N<1$.

 We prove that
$
\overline C_N< \sin\big(\frac{\pi}{N+3}\big)$ for all $N\geq 3$, odd. To this aim we note that
$
\frac{3}{4}+\sum\limits_{\substack{
		m=3\\
		\text{odd}}}^N \big[\frac{1}{m}-\frac{1}{m+1}\big]^2> \frac{3}{4}$ and $\sum\limits_{m=N+2}^\infty  \frac{1}{m^2}< \int_{N+1}^\infty\frac{1}{x^2}\,dx=\frac{1}{N+1},
$
implying
$
\overline C_N< \frac{4}{3(N+1)}$ $\forall N\geq 3.$
Finally, by exploiting \eq{stimaseno}, we get $\overline C_N <\frac{4}{3(N+1)}\leq \frac{3}{N+3}\leq  \sin\big(\frac{\pi}{N+3}\big)$ for all $N\geq 3$.\par

Summarizing, from the above estimates we get
$$\mathcal{S}_1(\overline z)>0\qquad\forall \overline z\in\bigg[\dfrac{\pi}{N+3},\dfrac{\pi}{N+1}\bigg)\qquad \forall N\geq 3,\text{ odd}\,.$$
By repeating the above argument for all $3 \leq N \leq \overline N$ with $N$ and $\overline N$ odd and taking the union of the sets, we finally obtain
$
\mathcal{S}_1(z)>0$ $\forall z\in\big(\frac{3}{4}\pi,\pi-\frac{\pi}{\overline N+3}\big]$.
Hence, passing to the limit as  $\overline N\rightarrow+\infty$, and combining with \eqref{eq19} we obtain \eqref{S1}.
\end{proof}

	\begin{lemma}\label{lemma42}
	Let the series $\mathcal{S}_2(z)$ be as in \eqref{seriedef}; then
	\begin{equation}\label{S2}
	\mathcal{S}_2(z)<0\qquad\forall z \in (0,\pi).
	\end{equation}
\end{lemma}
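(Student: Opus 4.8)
The plan is not to repeat the three-step estimate of Lemma \ref{lemma4}, but to reduce \eqref{S2} directly to \eqref{S1} by a reflection. The key remark is the elementary identity
\[
(-1)^m\sin(mz) = -\sin\big(m(\pi-z)\big)\qquad\forall m\in\mathbb{N}^+,\ \forall z\in\R,
\]
which follows at once from $\sin(m\pi-mz)=\sin(m\pi)\cos(mz)-\cos(m\pi)\sin(mz)=-(-1)^m\sin(mz)$, using $\sin(m\pi)=0$ and $\cos(m\pi)=(-1)^m$.

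First I would note that, by \eqref{decreasing}, one has $0<\phi_m<\phi_1$ for all $m\geq 2$, so both series in \eqref{seriedef} are dominated by $\phi_1\sum_m m^{-2}$ and hence converge absolutely; this legitimates the termwise manipulation below. Applying the identity above to each term of $\mathcal{S}_2$ then gives
\[
\mathcal{S}_2(z)=\sum_{m=1}^{+\infty}(-1)^m\frac{\phi_m}{m^2}\sin(mz)
=-\sum_{m=1}^{+\infty}\frac{\phi_m}{m^2}\sin\big(m(\pi-z)\big)=-\mathcal{S}_1(\pi-z)\qquad\forall z\in\R.
\]

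To conclude, for $z\in(0,\pi)$ we have $\pi-z\in(0,\pi)$, so Lemma \ref{lemma4} gives $\mathcal{S}_1(\pi-z)>0$, whence $\mathcal{S}_2(z)=-\mathcal{S}_1(\pi-z)<0$, which is \eqref{S2}. I do not expect a genuine obstacle here: the whole content of the lemma is inherited from Lemma \ref{lemma4}, and the only step that needs a (routine) justification is the termwise rearrangement, which is immediate from absolute convergence; the single idea is to exploit the symmetry $z\mapsto\pi-z$, which interchanges $\mathcal{S}_1$ and $-\mathcal{S}_2$.
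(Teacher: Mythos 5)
Your proof is correct and is essentially identical to the paper's: both use the reflection identity $(-1)^m\sin(mz)=-\sin\big(m(\pi-z)\big)$ to write $\mathcal{S}_2(z)=-\mathcal{S}_1(\pi-z)$ and then invoke Lemma \ref{lemma4}. The only difference is that you spell out the elementary trigonometric computation and the absolute-convergence justification, which the paper leaves implicit.
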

\begin{proof}
It suffices noticing that:
$$\mathcal{S}_2(z)=\mathcal{S}_2(\pi-\overline z)=-\mathcal{S}_1(\overline z)\qquad \forall\,  z, \overline z\in (0, \pi)$$
with $\mathcal{S}_1$ as given in \eqref{seriedef}. Then, the thesis comes from Lemma \ref{lemma4} since $\mathcal{S}_1(\overline z)>0$ $\forall\,  \overline z\in (0, \pi)$.

\end{proof}

\textbf{Proof of Theorem \ref{greenmonotonia} completed.} 
We neglect the dependence on $y,w$ since it does not affect the results. Differentiating \eqref{green} with respect to $x$ we get
\begin{equation*}
\begin{split}
G_x(x,\rho)=\dfrac{1}{2\pi}\sum_{m=1}^{+\infty} \dfrac{\phi_m}{m^2}\sin(m\rho)\cos(mx)\,.
\end{split}
\end{equation*}
By \eqref{seriedef}, we may write
\begin{equation*}
\begin{split}
G_x(0,\rho)=\dfrac{\mathcal{S}_1(\rho)}{2\pi}\quad \text{and} \quad G_x(\pi,\rho)=\dfrac{\mathcal{S}_2(\rho)}{2\pi}\qquad \forall\rho\in(0,\pi).
\end{split}
\end{equation*}
 Therefore, the sign of $G_x(0,\rho)$ and of $G_x(\pi,\rho)$ follows from from Lemma \ref{lemma4} and Lemma \ref{lemma42}. \par

Next we turn to the sign of $G_x$ for $x=\frac{\pi}{2}$. Clearly, $G_x\big(\frac{\pi}{2},\frac{\pi}{2}\big)=0$. For $\rho \in (0,\frac{\pi}{2})$, we have
 $$G_x\bigg(\frac{\pi}{2},\rho\bigg)=\dfrac{1}{8\pi}\sum_{k=1}^\infty (-1)^{k}\dfrac{\phi_{2k}}{k^2}\sin(2k\rho)=\dfrac{1}{8\pi}\sum\limits_{k=1}^\infty (-1)^{k}\dfrac{\phi_{2k}}{k^2}\sin(k\widetilde\rho)\qquad \forall \widetilde \rho= 2 \rho \in (0,\pi)\,.$$
 For $\rho \in (\frac{\pi}{2}, \pi)$ we have
 $$G_x\bigg(\frac{\pi}{2},\rho\bigg)=\dfrac{1}{8\pi}\sum_{k=1}^\infty (-1)^{k}\dfrac{\phi_{2k}}{k^2}\sin(2k\rho)=\dfrac{1}{8\pi}\sum\limits_{k=1}^\infty \dfrac{\phi_{2k}}{k^2}\sin(k\widetilde\rho)\qquad \forall \widetilde \rho= 2 \rho-\pi \in (0,\pi).$$
Since the $\phi_{2k}$ are decreasing with respect to $k$, see Theorem \ref{monotonia}-\eqref{decreasing}, the proof of the sign of the above term follows by arguing as in the proof of Lemmas \ref{lemma4} and \ref{lemma42} with minor changes.

\section{Proof of Corollary \ref{hopfcor}, Corollary \ref{pppeig} and duality principle}\label{proofscorollaries}
\subsection{Proof of Corollary \ref{hopfcor}}
Since $u$ writes
$$  u(x,y)= \int_{\Omega} G(x,y,\rho,w) \,f(\rho,w)\,d\rho\, d w \quad \forall (x,y) \in \overline \Omega\,, $$
the proofs of both the sign and the monotonicity issues follow as direct consequence of the related properties of the Green function given in Proposition \ref{monotonia} and Theorem \ref{greenmonotonia}.
\subsection{Proof of Corollary \ref{pppeig}}
	The proof that the first eigenfunction is of one sign, and hence simple, follows by exploiting the so-called dual cone decomposition technique which relies on the positivity preserving property stated in Corollary \ref{hopfcor}. Since the proof is standard we omit it and we refer the interested reader to \cite[Lemma 7.2]{befafega} where the same issue, together with the simplicity of the first eigenvalue, was proved for a related fourth order eigenvalue problem in dimension 1. As concerns the regularity of $u_p$, it follows by combining elliptic regularity and embedding arguments. Indeed, it was proved in \cite[Lemma 4.2]{fergaz} that the operator in \eq{weight} satisfies the the complementing conditions of Agmon-Douglis-Nirenberg \cite{ADN}, hence elliptic regularity theory applies. In particular, from $\lambda_1(p) p u_p\in L^{\infty}(\Omega)$ we infer that $u_p \in W^{4,q}(\Omega)$ for all $1<q< +\infty$; then the thesis comes by noticing that $W^{4,q}(\Omega)\subset C^{3,\gamma}(\overline \Omega)$ for some $0<\gamma <1$, see \cite[Theorem 5.4]{adams}. \par
	Now we turn to the sign of $(u_p)_x$ on the short edges. Recalling Theorem \ref{greenmonotonia}, we have
	$$  (u_p)_x(0,y)=\lambda_1(p)  \int_{\Omega} G_x(0,y,\rho,w) \,p(\rho,w)\,u_p(\rho,w)\,d\rho\, d w>0 \quad \forall y \in [-\ell, \ell]\,. $$
	Similarly, we get $ (u_p)_x(\pi,y)<0$ for all $y \in [-\ell, \ell] $ and this concludes the proof. 
\subsection{Duality principle}

Let $\cG: L^{2}(\Omega) \to H^2_*(\Omega)$ denote the solution operator for the biharmonic equation under partially hinged boundary conditions defined by
\begin{equation}\label{green-operator1}
(\cG f,\varphi)_{H^2_*(\Omega)}:= \int_\Omega f\varphi \qquad \text{for all } \varphi\in H^2_*(\Omega).
\end{equation}
In terms of the Green function \eqref{green}, we get the usual integral representation:
\begin{equation}\label{green-operator}
[\cG f](x,y)= \int_\Omega G(x,y,\rho,w)f(\rho,w)\,d\rho dw \qquad \forall (x,y) \in \Omega.
\end{equation}
 Next, inspired by \cite[Lemma 12]{begawe}, we associate to  \eqref{lambdaP} the following dual maximization problem: 
\begin{equation}
\label{eq:12} \Theta_1(p )= \sup_{v \in L^{2}(\Omega)\setminus\{0\}}\frac{\int_{\Omega}\cG(p\,v)\,p\,v\:dxdy}{\|\sqrt{p}\,v\|^2_2}\,.
\end{equation}
By standard compactness arguments, the supremum in \eqref{eq:12} is achieved, furthermore, if $v$ is a maximizer, by exploiting \eqref{green-operator} and the positivity of $G$, also $|v|\in L^{2}(\Omega)$ is a maximizer; hence, a maximizer to \eqref{eq:12} can always be assumed nonnegative.  Finally we state:
\begin{lemma}\label{dualityprinciple} (Duality principle) Let $p\in P_{\alpha, \beta}$, then $\Theta_1(p)= \lambda_1^{-1}(p)$. Furthermore,
\begin{itemize}
	\item[$(i)$]  if $u \in H^2_*(\Omega)$ is a positive minimizer of \eqref{lambdaP} with $\|\sqrt{p}\,u\|_2=1$, then $u$ is a maximizer for \eqref{eq:12};
\item[$(ii)$]  if $v \in L^{2}(\Omega)$ is a nonnegative maximizer of \eqref{eq:12} with $\|\sqrt{p}\,v\|_2=1$, then $v \in H^2_*(\Omega)$ and it is a minimizer for \eqref{lambdaP}, hence positive. 
	\end{itemize}
\end{lemma}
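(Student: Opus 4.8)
The statement is a duality principle relating the minimization problem \eqref{lambdaP} for $\lambda_1(p)$ to the maximization problem \eqref{eq:12} for $\Theta_1(p)$. The natural approach is to show the two extremal values are reciprocal by proving both inequalities $\Theta_1(p)\le\lambda_1^{-1}(p)$ and $\Theta_1(p)\ge\lambda_1^{-1}(p)$, and along the way to establish the correspondence between minimizers and maximizers claimed in $(i)$ and $(ii)$. The key algebraic identity underlying everything is that $\cG$ is the inverse of the biharmonic operator in the weak sense \eqref{green-operator1}: for $f\in L^2(\Omega)$, the function $w=\cG f\in H^2_*(\Omega)$ satisfies $(w,\varphi)_{H^2_*}=\int_\Omega f\varphi$ for all $\varphi\in H^2_*(\Omega)$; and conversely, if $w\in H^2_*(\Omega)$ then $w=\cG h$ where $h\in L^2(\Omega)$ is defined by the Riesz representation, but more usefully one has the Cauchy–Schwarz-type bound $\big(\int_\Omega fw\big)^2\le (w,w)_{H^2_*}\cdot\int_\Omega\cG(f)f$ with equality iff $w=\cG f$ — this is just Cauchy–Schwarz in the scalar product $(\cdot,\cdot)_{H^2_*}$ after writing $\int_\Omega fw=(\cG f,w)_{H^2_*}$.

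\textbf{Step 1: $\Theta_1(p)\le\lambda_1^{-1}(p)$ and proof of $(ii)$.} Let $v\in L^2(\Omega)\setminus\{0\}$ and set $w=\cG(p\,v)\in H^2_*(\Omega)$. By \eqref{green-operator1}, $\int_\Omega \cG(p\,v)\,p\,v\,dxdy=(w,w)_{H^2_*}=\|w\|_{H^2_*}^2$, and also $\int_\Omega p\,v\,\varphi=(w,\varphi)_{H^2_*}$ for all $\varphi$. Taking $\varphi=w$ and applying Cauchy–Schwarz in $L^2$ with the weight $\sqrt p$, $\|w\|_{H^2_*}^2=\int_\Omega (\sqrt p\,v)(\sqrt p\, w)\le\|\sqrt p\,v\|_2\,\|\sqrt p\,w\|_2$. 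Hence
\begin{equation*}
\frac{\int_\Omega\cG(p\,v)\,p\,v\,dxdy}{\|\sqrt p\,v\|_2^2}=\frac{\|w\|_{H^2_*}^2}{\|\sqrt p\,v\|_2^2}\le\frac{\|\sqrt p\,w\|_2^2}{\|w\|_{H^2_*}^2}\le\lambda_1^{-1}(p)\,,
\end{equation*}
where the last inequality is \eqref{lambdaP}. Taking the supremum over $v$ gives $\Theta_1(p)\le\lambda_1^{-1}(p)$. If $v$ is a maximizer with $\|\sqrt p\,v\|_2=1$, both inequalities above are equalities; equality in the first forces $\sqrt p\, w=c\sqrt p\, v$ a.e. for some constant $c$, i.e. $w=v$ a.e. on $\{p>0\}=\Omega$ (recall $p\ge\alpha>0$), so $v=\cG(p\,v)\in H^2_*(\Omega)$; equality in the second shows $v$ realizes the minimum in \eqref{lambdaP}. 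By Corollary \ref{pppeig} the first eigenfunction is of one sign, and since $v$ is nonnegative and nontrivial it is (a positive multiple of) $u_p$, hence positive by Proposition \ref{monotonia}; this proves $(ii)$.

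\textbf{Step 2: $\Theta_1(p)\ge\lambda_1^{-1}(p)$ and proof of $(i)$.} Let $u$ be a positive minimizer of \eqref{lambdaP} with $\|\sqrt p\,u\|_2=1$. The Euler–Lagrange equation \eqref{eigenweak1} reads $(u,\varphi)_{H^2_*}=\lambda_1(p)\int_\Omega p\,u\,\varphi$ for all $\varphi\in H^2_*(\Omega)$, which says precisely that $u=\lambda_1(p)\,\cG(p\,u)$, i.e. $\cG(p\,u)=\lambda_1^{-1}(p)\,u$. Plugging $v=u$ into the quotient in \eqref{eq:12}:
\begin{equation*}
\frac{\int_\Omega\cG(p\,u)\,p\,u\,dxdy}{\|\sqrt p\,u\|_2^2}=\frac{\lambda_1^{-1}(p)\int_\Omega p\,u^2\,dxdy}{\|\sqrt p\,u\|_2^2}=\lambda_1^{-1}(p)\,.
\end{equation*}
Thus $\Theta_1(p)\ge\lambda_1^{-1}(p)$, and combined with Step 1 we get $\Theta_1(p)=\lambda_1^{-1}(p)$; moreover $u$ attains the supremum, proving $(i)$. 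Existence of the positive minimizer $u$ needed here is classical (direct method for \eqref{lambdaP}), and positivity/simplicity come from Corollary \ref{pppeig}.

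\textbf{Main obstacle.} None of the individual steps is deep; the only point requiring care is the rigidity argument in Step 1, namely deducing $v=\cG(p\,v)$ from the equality case in Cauchy–Schwarz — one must be careful that equality $\|w\|_{H^2_*}^2=\|\sqrt p\, v\|_2\|\sqrt p\, w\|_2$ combined with $\|w\|_{H^2_*}^2=\int(\sqrt p\, v)(\sqrt p\, w)$ forces proportionality of $\sqrt p\, v$ and $\sqrt p\, w$ \emph{and} the right constant, and then that $p\ge\alpha>0$ a.e. upgrades this to $v=w$ a.e. on all of $\Omega$, so that the $L^2$ function $v$ actually coincides with the $H^2_*$ function $w$. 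The rest is bookkeeping with the defining identity \eqref{green-operator1} of $\cG$.
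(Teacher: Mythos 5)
Your proof is correct in substance but takes a different route from the paper for the inequality $\Theta_1(p)\le\lambda_1^{-1}(p)$. The paper derives the Euler--Lagrange equation of the maximization problem \eqref{eq:12} at a maximizer $v$, which gives $\cG(p\,v)=\Theta_1 v$ a.e.\ directly, hence $v\in H^2_*(\Omega)$, and then tests \eqref{lambdaP} with $v$; you instead prove the bound for \emph{every} $v\in L^2(\Omega)\setminus\{0\}$ via the chain $\int_\Omega\cG(pv)\,pv=\|w\|_{H^2_*}^2=\int_\Omega(\sqrt p\,v)(\sqrt p\,w)\le\|\sqrt p\,v\|_2\|\sqrt p\,w\|_2$ with $w=\cG(pv)$, and then extract the structure of a maximizer from the equality case in Cauchy--Schwarz. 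Your version buys you the inequality without having to justify the Euler--Lagrange equation of the constrained $L^2$ problem; the paper's version buys the identification $\cG(pv)=\Theta_1 v$ (and hence $v\in H^2_*$) in one line without an equality-case analysis. Both are legitimate, and your Step 2 coincides with the paper's argument for the reverse inequality.

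One small but genuine error: in the equality case you assert $w=v$, i.e.\ $v=\cG(p\,v)$. The Cauchy--Schwarz equality only gives $\sqrt p\,w=c\,\sqrt p\,v$ for some constant $c$, hence $w=c\,v$ since $p\ge\alpha>0$; computing $\|w\|_{H^2_*}^2=\int_\Omega p\,v\,w=c\,\|\sqrt p\,v\|_2^2=c$ and comparing with the maximized quotient shows $c=\Theta_1=\lambda_1^{-1}$, which is not $1$ in general. The correct relation is therefore $v=\lambda_1\,\cG(p\,v)$ (the eigenvalue equation, consistent with the paper's $\cG(p\,v)=\Theta_1 v$). This does not damage your conclusions --- $v$ is still a nonzero scalar multiple of the $H^2_*$ function $w$, hence lies in $H^2_*(\Omega)$ and minimizes \eqref{lambdaP} --- but the claim ``$v=\cG(p\,v)$'' and the phrase about ``the right constant'' being $1$ should be corrected.
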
\begin{proof}
	Let $u \in H^2_*(\Omega)$ be a positive minimizer for \eqref{lambdaP} with $\|\sqrt{p}\,u\|_2=1$. Then $u$ solves problem \eqref{weight}, therefore $u=\lambda_1 \cG (p \,u)$. By multiplying both sides of this equality by $p\,u$
	and integrating over $\Omega$, we get
	$
	\lambda_1\int_\Omega\cG(p \,u)\,p \,u\:dxdy=\int_\Omega p \,u^2\,dx=\|\sqrt{p}\,u\|_2^2=1,
	$
	hence
	\begin{equation}
	\label{upper-ineq} \Theta_1 \geq \frac{\int_{\Omega}\cG(p\,u)\,p\,u\:dxdy}{\|\sqrt{p}u\|^2_2}=\frac{1}{\lambda_1}.
	\end{equation}
	Viceversa let $v \in L^{2}(\Omega)$ be a nonnegative maximizer for \eqref{eq:12} with $\|\sqrt{p}\,v\|_2=1$. The corresponding Euler-Lagrange equation in weak form reads
	$$
	\int_{\Omega}\cG(p\, v)\,p\,\varphi\:dxdy=\Theta_1
	\int_{\Omega}p\,v\,\varphi\,dxdy\qquad\forall \varphi \in L^{2}(\Omega),
	$$
	implying $\cG(p\, v)=\Theta_1\,v$ a. e. in $\Omega$.
	Therefore, taking $v=\frac{1}{\Theta_1}\cG(p\, v) \in H^2_*(\Omega)$, we obtain by
	(\ref{green-operator1})
	$$
	\Theta_1 \|v\|_{H^2_*(\Omega)}^{2}=\Theta_1 (v, v )_{H^2_*(\Omega)} = (\cG(p\, v),v )_{H^2_*(\Omega)}=\int_{\Omega}p\, v^{2}\,dxdy=1,
	$$
	so that
	\begin{equation}
	\label{lower-ineq}
	\lambda_1 \leq \frac{\|v\|_{H^2_*(\Omega)}^{2}}{\|\sqrt{p}\,v\|_2^2}=\frac{1}{\Theta_1}.
	\end{equation}
By (\ref{upper-ineq}) and (\ref{lower-ineq}) we get $\Theta_1(p)= \lambda_1^{-1}(p)$. But then the first inequality in (\ref{upper-ineq}) must be an equality, and $(i)$ follows. Similarly, the first
	inequality in (\ref{lower-ineq}) must be an equality, and $(ii)$ follows.
\end{proof}

\section{Proof of Theorem \ref{partialsym}} \label{6}
By Proposition \ref{thm-exist-qual} we know that there exists an optimal pair $(p_{\widehat{u}},{\widehat u}) \in P_{\alpha,\beta} \times H^2_*(\Omega)$, with $
p_{\widehat{u}}$ as given in \eqref{pS}. For the sake of simplicity, in the following we will simply denote $(p_u, u)$ this pair, hence
	\begin{equation}\label{pu}
p_u (x,y):= \alpha \chi_{\{0<u \leq \sqrt{t}\}}(x,y)+ \beta \chi_{\{u > \sqrt{t}\}}(x,y)\,\quad \text{with }(x,y)\in  \Omega \,
\end{equation}
and $t>0$ is fixed as in the statement of Proposition \ref{thm-exist-qual}.\par

To begin with we recall some notations about the polarization of a function, adapting the technique to our framework. Let $\mathcal{H}_{\frac{\pi}{2}}\subset \mathbb{R}^2$ be the half-plane defined in \eqref{iperp}; for every $(x,y)\in\mathbb{R}^2$ we denote by $(\overline x, y)\in \mathbb{R}^2$ the reflection of $(x,y)$ with respect to $\partial\mathcal{H}_{\frac{\pi}{2}}$, i.e. $\overline x=\pi-x$. For every measurable function $v: \Omega\rightarrow\mathbb{R}$ we define its polarization with respect to $\mathcal{H}_{\frac{\pi}{2}}$ as $v_{\mathcal{H}_{\frac{\pi}{2}}}: \Omega \rightarrow\mathbb{R}$ such that
\begin{equation*}
	v_{\mathcal{H}_{\frac{\pi}{2}}}(x,y):=\begin{cases}
	\max\{v(x,y),v(\overline x,y)\}\qquad &\text{if }(x,y)\in\mathcal{H}_{\frac{\pi}{2}}\cap \Omega\\\min\{v(x,y),v(\overline x,y)\}\qquad &\text{if }(x,y)\in \Omega\setminus \mathcal{H}_{\frac{\pi}{2}} \,.
	\end{cases}
\end{equation*}
In the sequel, for the sake of brevity, we will simply write $\mathcal{H}$ instead of $\mathcal{H}_{\frac{\pi}{2}}$ and $v_{\mathcal{H}}$ instead of $v_{\mathcal{H}_{\frac{\pi}{2}}}$. It is readily seen that the following pointwise identity holds:
\begin{equation}\label{prop2}
v(x,y)+v(\overline x,y)=v_{\mathcal{H}}(x,y)+v_{\mathcal{H}}(\overline x,y)\qquad \forall (x,y)\in \Omega.
\end{equation}
Now, for $t>0$ as fixed in \eqref{pu}, we also consider the weight:
\begin{equation*}
p_{u_{\mathcal{H}}} (x,y):= \alpha \chi_{\{0<u_{\mathcal{H}} \leq \sqrt{t}\}}(x,y)+ \beta \chi_{\{u_{\mathcal{H}} > \sqrt{t}\}}(x,y)\, \quad \text{with }(x,y)\in  \Omega\,.
\end{equation*}
By direct inspection, arguing as in the proof of  Lemma \ref{lemma5} below, we have that $\int_{\Omega}p_{u_{\mathcal{H}}}\,dxdy=\int_{\Omega}p\,dxdy=|\Omega|$, hence $p_{u_{\mathcal{H}}}\in P_{\alpha, \beta}$. Next we state two technical lemmas whose proofs can be obtained by slightly modifying the proofs of similar statements in \cite{CV}, hence we omit them.
\begin{lemma}\label{lemma00} \cite[Lemma 5.3]{CV} The following identity holds:
	\begin{equation*}
		[p_u\,u]_\mathcal{H}\equiv p_{u_{\mathcal{H}}}u_\mathcal{H}\, \text{ in } \Omega\,.
	\end{equation*}
\end{lemma}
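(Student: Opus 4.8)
The plan is to deduce the identity from the elementary fact that a monotone transformation commutes with the two‑point rearrangement. The crucial remark is that, wherever $u$ is positive, the product $p_u\,u$ depends on $u$ only through a single nondecreasing function. Concretely, with $t>0$ as in \eqref{pu}, define $h\colon(0,+\infty)\to(0,+\infty)$ by $h(s):=\alpha s$ if $0<s\le\sqrt t$ and $h(s):=\beta s$ if $s>\sqrt t$; since $0<\alpha<1<\beta$, the jump of $h$ at $s=\sqrt t$ satisfies $\alpha\sqrt t<\beta\sqrt t$, so $h$ is nondecreasing (indeed strictly increasing) on $(0,+\infty)$. Because $u>0$ in $\Omega$ by Proposition \ref{thm-exist-qual}, the definition \eqref{pu} gives $p_u\,u=h(u)$ pointwise in $\Omega$.

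First I would record that $u_{\mathcal H}>0$ in $\Omega$: the rectangle $\Omega=(0,\pi)\times(-\ell,\ell)$ is symmetric with respect to the line $\partial\mathcal H=\{x=\pi/2\}$, hence $(\overline x,y)\in\Omega$ whenever $(x,y)\in\Omega$, and then $\max\{u(x,y),u(\overline x,y)\}$ and $\min\{u(x,y),u(\overline x,y)\}$ are both positive; in particular the weight $p_{u_{\mathcal H}}$ defined just above the statement also satisfies $p_{u_{\mathcal H}}\,u_{\mathcal H}=h(u_{\mathcal H})$ pointwise in $\Omega$. Next I would use the elementary identities $\max\{h(a),h(b)\}=h(\max\{a,b\})$ and $\min\{h(a),h(b)\}=h(\min\{a,b\})$, valid for all $a,b>0$ because $h$ is nondecreasing. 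Applying them with $a=u(x,y)$ and $b=u(\overline x,y)$, and using the definition of polarization, for $(x,y)\in\mathcal H\cap\Omega$ one obtains
\[
[p_u\,u]_{\mathcal H}(x,y)=\max\{h(u(x,y)),h(u(\overline x,y))\}=h(\max\{u(x,y),u(\overline x,y)\})=h(u_{\mathcal H}(x,y)),
\]
while for $(x,y)\in\Omega\setminus\mathcal H$ the same computation with $\min$ replacing $\max$ gives $[p_u\,u]_{\mathcal H}(x,y)=h(u_{\mathcal H}(x,y))$. In either case the right‑hand side equals $p_{u_{\mathcal H}}(x,y)\,u_{\mathcal H}(x,y)$ by the previous step, and the asserted identity follows.

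There is no genuine analytical difficulty here; the two points that need to be handled with some care are the monotonicity of $h$ — which is exactly where the strict ordering $\alpha<\beta$ enters, ensuring that the two linear branches $\alpha s$ and $\beta s$ do not overlap across $s=\sqrt t$ — and the positivity of the polarized function $u_{\mathcal H}$, which lets one read the identity $p_{u_{\mathcal H}}\,u_{\mathcal H}=h(u_{\mathcal H})$ off $u_{\mathcal H}$ exactly as $p_u\,u=h(u)$ is read off $u$. Everything else is a direct unravelling of the definition of polarization.
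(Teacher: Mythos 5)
Your proof is correct. The paper does not prove this lemma itself, instead citing \cite[Lemma 5.3]{CV}, so there is no in-paper argument to compare against; your route --- writing $p_u\,u=h(u)$ and $p_{u_\HH}\,u_\HH=h(u_\HH)$ for the strictly increasing step function $h(s)=\alpha s$ on $(0,\sqrt t\,]$, $h(s)=\beta s$ on $(\sqrt t,+\infty)$ (strict monotonicity needing only $\alpha<\beta$), observing that $u_\HH>0$ throughout $\Omega$ because $\Omega$ is symmetric under $x\mapsto\pi-x$, and then using that polarization commutes with composition by a nondecreasing map via $\max\{h(a),h(b)\}=h(\max\{a,b\})$ and its $\min$ analogue --- is exactly the standard mechanism behind identities of this type and is essentially the argument of the cited reference. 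The argument is complete and clean.
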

\begin{lemma}\label{lemma10} \cite[Lemma 5.4]{CV}
	There holds:
	\begin{itemize}
	\item[$(i)$] 	if
	$ p_{u}(x,y)u(x,y)\equiv [p_u(x,y)\,u(x,y)]_\mathcal{H}\text{ in } \Omega$, then $u(x,y)\equiv u_\mathcal{H}(x,y)\text{ in }  \Omega$;
	\item[$(ii)$] 	 if
	$ p_{u}(\overline x,y)u(\overline x,y)\equiv [p_u(x,y)\,u(x,y)]_\mathcal{H}\text{ in }  \Omega$, then $u(\overline x,y)\equiv u_\mathcal{H}(x,y)\text{ in } \Omega$\,.
	\end{itemize}	
\end{lemma}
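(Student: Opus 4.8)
The plan is to reduce the statement to the elementary observation that, since $u>0$ in $\Omega$, the product $p_u\,u$ is a strictly increasing function of $u$. Set $g(s):=\alpha$ for $0<s\le\sqrt{t}$ and $g(s):=\beta$ for $s>\sqrt{t}$; by \eqref{pu} we have $p_u(x,y)=g(u(x,y))$, hence $p_u(x,y)\,u(x,y)=h(u(x,y))$ with $h(s):=s\,g(s)$, i.e. $h(s)=\alpha s$ on $(0,\sqrt{t}]$ and $h(s)=\beta s$ on $(\sqrt{t},+\infty)$. I would first check that $h$ is strictly increasing on $(0,+\infty)$: it is linear with positive slope on each of the two intervals, and at the junction $\sup_{(0,\sqrt{t}]}h=\alpha\sqrt{t}<\beta\sqrt{t}=\inf_{(\sqrt{t},+\infty)}h$, the inequality being due to $0<\alpha<1<\beta$. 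In particular $h$ is injective and, for all $a,b>0$, one has $h(a)\le h(b)\iff a\le b$. It is also worth noting that $u_{\mathcal{H}}>0$ in $\Omega$, being the pointwise maximum or minimum of the two positive numbers $u(x,y)$ and $u(\overline{x},y)$, both points lying in $\Omega$ because $\overline{x}=\pi-x\in(0,\pi)$.

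\textbf{Unwinding the polarization.} Fix $(x,y)\in\mathcal{H}\cap\Omega$, so that $(\overline{x},y)\in\Omega$ as well. By definition of the polarized function, $[p_u\,u]_{\mathcal{H}}(x,y)=\max\{h(u(x,y)),h(u(\overline{x},y))\}$ when $(x,y)\in\mathcal{H}\cap\Omega$, with $\max$ replaced by $\min$ when $(x,y)\in\Omega\setminus\mathcal{H}$. For $(i)$, the hypothesis $p_u(x,y)\,u(x,y)\equiv[p_u\,u]_{\mathcal{H}}$ thus amounts to $h(u(x,y))\ge h(u(\overline{x},y))$ on $\mathcal{H}\cap\Omega$ and $h(u(x,y))\le h(u(\overline{x},y))$ on $\Omega\setminus\mathcal{H}$; by the monotonicity of $h$ these become $u(x,y)\ge u(\overline{x},y)$ on $\mathcal{H}\cap\Omega$ and $u(x,y)\le u(\overline{x},y)$ on $\Omega\setminus\mathcal{H}$, which is exactly the assertion $u(x,y)\equiv u_{\mathcal{H}}(x,y)$ in $\Omega$. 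For $(ii)$, the hypothesis $p_u(\overline{x},y)\,u(\overline{x},y)\equiv[p_u\,u]_{\mathcal{H}}$ reads $h(u(\overline{x},y))\ge h(u(x,y))$ on $\mathcal{H}\cap\Omega$ and $h(u(\overline{x},y))\le h(u(x,y))$ on $\Omega\setminus\mathcal{H}$, equivalently $u(\overline{x},y)=\max\{u(x,y),u(\overline{x},y)\}$ on $\mathcal{H}\cap\Omega$ and $u(\overline{x},y)=\min\{u(x,y),u(\overline{x},y)\}$ on $\Omega\setminus\mathcal{H}$, i.e. $u(\overline{x},y)\equiv u_{\mathcal{H}}(x,y)$ in $\Omega$. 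This finishes the argument.

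\textbf{Main difficulty.} There is essentially none. The only substantive point is the strict monotonicity of the composite map $s\mapsto s\,g(s)$, which rests on $0<\alpha<1<\beta$ and on the positivity of $u$; everything else is a direct rewriting of the definitions of $p_u$ in \eqref{pu} and of the polarization operator, and the proof is in fact merely the adaptation to the present setting of that of \cite[Lemma 5.4]{CV}.
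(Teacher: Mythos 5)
Your proof is correct. The paper itself omits the proof of this lemma, deferring to \cite[Lemma 5.4]{CV}, and your argument---writing $p_u\,u=h(u)$ with $h(s)=s\,g(s)$ strictly increasing on $(0,+\infty)$ thanks to $\alpha\sqrt{t}<\beta\sqrt{t}$ and the positivity of $u$, so that $h$ commutes with the pointwise $\max/\min$ defining the polarization---is exactly the intended adaptation and fills in the omitted details without any gap.
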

%
%
%
%
%

Finally, we prove the identity:

\begin{lemma}\label{lemma5}
	We have
	\begin{equation*}
		\int_{\Omega}p_{u_\HH}u^2_\HH\,dxdy=\int_{\Omega} p_u u^2\,dxdy.
	\end{equation*} 
	
\end{lemma}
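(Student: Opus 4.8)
The plan is to reduce the claimed identity to the elementary fact that polarization preserves the integral of any pointwise composition, exploiting that both $\widehat u$ and $\widehat u_\HH$ are strictly positive in $\Omega$. For brevity write $(p_u,u)$ for the optimal pair as in \eqref{pu}, and introduce the single auxiliary function $\psi:\R\to\R$ defined by $\psi(s):=\alpha s^2$ for $0<s\le\sqrt t$, $\psi(s):=\beta s^2$ for $s>\sqrt t$, and $\psi(s):=0$ for $s\le 0$, where $t>0$ is the threshold fixed in \eqref{pu}. Since $u>0$ everywhere in $\Omega$ by Corollary \ref{pppeig}, a comparison with \eqref{pu} gives $p_u(x,y)\,u(x,y)^2=\psi(u(x,y))$ for a.e.\ $(x,y)\in\Omega$. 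Moreover $u>0$ in $\Omega$ forces $u_\HH>0$ in $\Omega$ as well (a maximum or a minimum of two positive numbers is positive), so the very definition of $p_{u_\HH}$ yields likewise $p_{u_\HH}(x,y)\,u_\HH(x,y)^2=\psi(u_\HH(x,y))$ for a.e.\ $(x,y)\in\Omega$. Hence the lemma is equivalent to the identity $\int_\Omega\psi(u_\HH)\,dxdy=\int_\Omega\psi(u)\,dxdy$.

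Next I would prove this last identity by the standard two--point argument. I split $\Omega=(\Omega\cap\HH)\cup(\Omega\setminus\HH)$ and, in the integral over $\Omega\setminus\HH$, perform the change of variables $(x,y)\mapsto(\overline x,y)$, which maps $\Omega\setminus\HH$ onto $\Omega\cap\HH$ up to a set of measure zero and has unit Jacobian because $\Omega$ is symmetric with respect to the line $x=\pi/2$. This rewrites both $\int_\Omega\psi(u_\HH)$ and $\int_\Omega\psi(u)$ as integrals over $\Omega\cap\HH$ of the symmetrised integrands $\psi(v(x,y))+\psi(v(\overline x,y))$, with $v=u_\HH$ and $v=u$ respectively. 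Finally, for every $(x,y)\in\Omega\cap\HH$ the definition of polarization gives the unordered-pair identity $\{u_\HH(x,y),u_\HH(\overline x,y)\}=\{u(x,y),u(\overline x,y)\}$, whence $\psi(u_\HH(x,y))+\psi(u_\HH(\overline x,y))=\psi(u(x,y))+\psi(u(\overline x,y))$ pointwise on $\Omega\cap\HH$; integrating over $\Omega\cap\HH$ then gives the desired equality. (Alternatively, one could argue from Lemma \ref{lemma00}: since $p_u=g(u)$ with $g$ non-decreasing, the functions $p_u u$ and $u$ are similarly ordered on each reflected pair, so polarization selects the larger, respectively smaller, value of both at the same point, which again yields $[p_u u]_\HH(x,y)\,u_\HH(x,y)+[p_u u]_\HH(\overline x,y)\,u_\HH(\overline x,y)=p_u(x,y)u(x,y)^2+p_u(\overline x,y)u(\overline x,y)^2$ on $\Omega\cap\HH$.)

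There is no substantial obstacle in this argument: the only point requiring a little care is the systematic use of the strict positivity of $u$, hence of $u_\HH$, in $\Omega$, which is precisely what allows one to write the product $p_u u^2$ as a fixed function $\psi$ of $u$ alone and thus to transport it by polarization; the remainder is the routine bookkeeping of the two--point rearrangement together with the measure-preserving reflection of the half-domain.
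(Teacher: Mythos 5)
Your proof is correct and rests on the same underlying mechanism as the paper's: split $\Omega$ into $\Omega\cap\HH$ and its reflection, change variables via $(x,y)\mapsto(\overline x,y)$, and use that polarization preserves the unordered pair $\{u(x,y),u(\overline x,y)\}$ pointwise. The paper's proof spells this out as an eight-term decomposition according to the sign of $u(x,y)-u(\overline x,y)$ and the level $\sqrt t$, then recombines by a change of variables; you instead observe that $p_u u^2=\psi\circ u$ and $p_{u_\HH}u_\HH^2=\psi\circ u_\HH$ for a single Borel function $\psi$ and invoke the standard fact that $\int_\Omega\psi(v_\HH)=\int_\Omega\psi(v)$, which is the same computation packaged more cleanly. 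Both approaches correctly rely on the strict positivity of $u$ (Corollary \ref{pppeig}) to express $p_u$ as a function of $u$ alone, and your explicit remark on this point is the right thing to highlight.
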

\begin{proof}
	We compute:
	{\small \begin{equation*}
	\begin{split}
	&\int_{\Omega}p_{u_\HH}(x,y)u^2_\HH(x,y)\,dxdy=\alpha\int_{\{u_\HH(x,y)\leq\sqrt{t}\}}u^2_\HH(x,y)\,dxdy+\beta\int_{\{u_\HH(x,y)\geq\sqrt{t}\}}u^2_\HH(x,y)\,dxdy\\
	&=\alpha\int_{\{(x,y)\in \Omega \cap \HH:\,u(x,y)\geq u(\overline x,y)\,\cap\, u(x,y)\leq\sqrt{t}\}}u^2(x,y)\,dxdy+\alpha\int_{\{(x,y)\in  \Omega \cap \HH:\,u(\overline x,y)> u(x,y)\,\cap\, u(\overline x,y)\leq\sqrt{t}\}}u^2(\overline x,y)\,dxdy\\
	&+\alpha\int_{\{(x,y)\in\Omega\setminus \HH:\,u(x,y)> u(\overline x,y)\,\cap\, u(\overline x,y)\leq\sqrt{t}\}}u^2(\overline x,y)\,dxdy+\alpha\int_{\{(x,y)\in\Omega\setminus\HH:\,u(\overline x,y)\geq u(x,y)\,\cap\, u( x,y)\leq\sqrt{t}\}}u^2(x,y)\,dxdy\\
	&+\beta\int_{\{(x,y)\in  \Omega \cap \HH:\,u(x,y)\geq u(\overline x,y)\,\cap\, u(x,y)>\sqrt{t}\}}u^2(x,y)\,dxdy+\beta\int_{\{(x,y)\in \Omega \cap \HH:\,u(\overline x,y)> u(x,y)\,\cap\, u(\overline x,y)>\sqrt{t}\}}u^2(\overline x,y)\,dxdy\\
	&+\beta\int_{\{(x,y)\in\Omega\setminus\HH:\,u(x,y)> u(\overline x,y)\,\cap\, u(\overline x,y)>\sqrt{t}\}}u^2(\overline x,y)\,dxdy+\beta\int_{\{(x,y)\in\Omega\setminus\HH:\,u(\overline x,y)\geq u(x,y)\,\cap\, u( x,y)>\sqrt{t}\}}u^2(x,y)\,dxdy\\
	&=\alpha\int_{\{(x,y)\in\Omega: u(x,y)\leq\sqrt{t}\}}u^2(x,y)\,dxdy+\beta\int_{\{(x,y)\in\Omega: u(x,y)>\sqrt{t}\}}u^2(x,y)\,dxdy=\int_{\Omega}p_{u}(x,y)u^2(x,y)\,dxdy\,,
	\end{split}
	\end{equation*}}	
	where the first equality in the last line simply comes by changing variables.
\end{proof}

Next we turn to the proof of some reflection properties of the Green function $G$ that will be crucial in the following.
\begin{lemma}\label{lemma0}
	For $(x,y)\in \mathcal{H} \cap \overline \Omega$ and $(\rho,w)\in \mathcal{H} \cap \overline \Omega$ we have:
	\begin{itemize}
		\item[$(i)$] $G(x,y,\rho,w)\geq \max\{G(\overline x,y,\rho,w),G(x,y,\overline\rho,w)\}$ with strict inequality if $x,\rho \neq 0,\frac{\pi}{2}$;
		\item[$(ii)$] $G(x,y,\rho,w)=G(\overline x,y,\overline\rho,w)$;
		\item[$(iii)$] $G(\overline x,y,\rho,w)=G(x,y,\overline\rho,w)$.
		\end{itemize}
	\end{lemma}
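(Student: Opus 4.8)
The plan is to work directly from the Fourier expansion \eqref{green} and exploit the elementary trigonometric identities relating $\sin(m x)$ and $\sin(m\overline x)$ with $\overline x = \pi - x$. First I would observe that
$$
\sin(m\overline x) = \sin(m\pi - m x) = (-1)^{m+1}\sin(mx),
$$
so that for any fixed $(y,w)$ the series \eqref{green} satisfies, termwise,
$$
G(\overline x, y, \rho, w) = \frac{1}{2\pi}\sum_{m=1}^{+\infty}\frac{\phi_m(y,w)}{m^3}(-1)^{m+1}\sin(m\rho)\sin(mx),
$$
and similarly for the reflection in the $\rho$ variable. Parts $(ii)$ and $(iii)$ then follow immediately by termwise comparison: reflecting \emph{both} $x$ and $\rho$ multiplies the $m$-th term by $(-1)^{m+1}(-1)^{m+1} = 1$, giving $(ii)$; reflecting only $x$ or only $\rho$ multiplies it by $(-1)^{m+1}$, so $G(\overline x, y, \rho, w)$ and $G(x, y, \overline\rho, w)$ have the same Fourier expansion, giving $(iii)$. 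These two identities require no estimates at all and can be dispatched in a couple of lines.

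The substance is in part $(i)$, the inequality $G(x,y,\rho,w) \ge \max\{G(\overline x, y, \rho, w), G(x, y, \overline\rho, w)\}$ for $(x,y),(\rho,w) \in \mathcal{H}\cap\overline\Omega$, i.e. for $x,\rho \in [0,\pi/2]$. By the symmetry $G(x,y,\rho,w) = G(\rho,y,x,w)$ noted in Proposition \ref{monotonia}/Theorem \ref{greenmonotonia}, it suffices to prove $G(x,y,\rho,w) \ge G(\overline x, y, \rho, w)$, i.e.
$$
G(x,y,\rho,w) - G(\overline x, y, \rho, w) = \frac{1}{2\pi}\sum_{m=1}^{+\infty}\frac{\phi_m(y,w)}{m^3}\big(1 - (-1)^{m+1}\big)\sin(m\rho)\sin(mx) \ge 0.
$$
The factor $1 - (-1)^{m+1}$ vanishes for odd $m$ and equals $2$ for even $m$, so writing $m = 2k$ this reduces to showing
$$
\sum_{k=1}^{+\infty}\frac{\phi_{2k}(y,w)}{k^3}\sin(2k\rho)\sin(2kx) \ge 0 \qquad \text{for } x,\rho \in [0,\tfrac{\pi}{2}].
$$
The natural strategy here is the same two-point / substitution trick already used at the end of the proof of Theorem \ref{greenmonotonia}: with $\widetilde x = 2x \in [0,\pi]$ and $\widetilde\rho = 2\rho \in [0,\pi]$ the sum becomes $\sum_k \phi_{2k} k^{-3}\sin(k\widetilde\rho)\sin(k\widetilde x)$, which has exactly the structure of the full Green function \eqref{green} but with coefficients $\phi_{2k}$ in place of $\phi_m$. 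Since the $\phi_{2k}$ are still strictly positive and strictly decreasing in $k$ by \eqref{decreasing}, one can invoke the positivity result \eqref{tesi} of Proposition \ref{monotonia} — or re-run its proof verbatim — to conclude that this sum is $\ge 0$ (and $>0$ when $\widetilde x, \widetilde\rho \in (0,\pi)$, i.e. $x,\rho\in(0,\pi/2)$, which is where the strict inequality in $(i)$ comes from; the cases $x=0$ or $x=\pi/2$ give equality since then every term vanishes). The claim about strictness when $x,\rho \neq 0, \pi/2$ follows from the strict positivity in \eqref{tesi}.

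The main obstacle I anticipate is a bookkeeping one rather than a conceptual one: I must make sure that the positivity statement \eqref{tesi}, which is stated for the specific coefficients $\phi_m$ arising from \eqref{weight}, applies to the rescaled sequence $(\phi_{2k})_k$. This is fine because the proof of \eqref{tesi} in \cite{ppp} uses only the two structural facts that the coefficients are positive and strictly decreasing (property \eqref{decreasing}), both of which are inherited by any subsequence; so either one cites \eqref{tesi} as a black box applied to $(\phi_{2k})_k$, or, to be safe, one remarks that the argument of \cite{ppp} goes through unchanged. The other points to handle carefully are: justifying termwise manipulation of the series (uniform convergence, which follows from $\phi_m < \phi_1$ and $\sum m^{-3} < \infty$, already used throughout Section \ref{proof}); and confirming the $\max$ in $(i)$ is legitimate, i.e. that $G(x,y,\rho,w)$ dominates \emph{each} of the two reflected quantities — the $\rho$-reflection case being identical to the $x$-reflection case by the $x\leftrightarrow\rho$ symmetry. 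No genuinely new estimate beyond those already established in the paper should be needed.
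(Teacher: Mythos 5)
Your proof is correct and follows essentially the same route as the paper: parts $(ii)$ and $(iii)$ by termwise use of $\sin(m\overline x)=(-1)^{m+1}\sin(mx)$, and part $(i)$ by reducing the difference $G(x,y,\rho,w)-G(\overline x,y,\rho,w)$ to the even-$m$ subsum, substituting $\widetilde x=2x$, $\widetilde\rho=2\rho$, and invoking the positivity argument of \cite[Theorem 2.2]{ppp} applied to the coefficients $\phi_{2k}$, with the remaining reflection handled by the $x\leftrightarrow\rho$ symmetry. The paper writes the first step via a sum-to-product identity ($2\cos(m\pi/2)\sin[m(x-\pi/2)]$) instead of your direct parity computation, but the resulting series and the key observation that $\phi_{2k}$ inherits positivity and monotonicity from \eqref{decreasing} are identical.
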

	
	\begin{proof}
	
	The variables $y,w$ do not play a role, hence we fix them and we do not write them in the proof.

\vspace{2mm}
\emph{Proof of (i).} We study the sign of
\begin{equation*}
\begin{split}
G(x,\rho)-G(\overline x,\rho)&=\dfrac{1}{\pi}\sum_{m=1}^{+\infty} \dfrac{\phi_m}{m^3}\sin(m\rho)\sin\bigg[m\bigg(x-\frac{\pi}{2}\bigg)\bigg]\cos\bigg(m\frac{\pi}{2}\bigg)\\
		&=\dfrac{1}{\pi}\sum_{k=1}^{+\infty} \dfrac{\phi_{2k}}{(2k)^3}\sin(2k\rho)(-1)^k\sin(2kx)(-1)^{k}\qquad \forall (x,\rho)\in[0,\pi/2]^2.
\end{split}
\end{equation*}For $\widetilde x=2x$ and $\widetilde \rho=2\rho$, we observe that
\begin{equation*}
\dfrac{1}{8\pi}\sum_{k=1}^{+\infty} \dfrac{\phi_{2k}}{k^3}\sin(k\widetilde \rho)\sin(k\widetilde x)>0 \qquad \forall (\widetilde x,\widetilde \rho)\in(0,\pi)^2\,.
\end{equation*}
The proof the above inequality can be obtained by repeating, with minor changes, the proof of \eqref{tesi} as given in \cite[Theorem 2.2]{ppp}. The main ingredient is the monotonicity of the functions $\phi_{2k}$ with respect to $k$. This implies that $G(x,\rho)-G(\overline x,\rho)>0$ for all $(x,\rho)\in (0,\pi/2)^2$. We observe that for $x\in\{0,\pi/2\}$ or $\rho\in\{0,\pi/2\}$ we have $G(x,\rho)=G(\overline x,\rho)$, giving  $G(x,\rho)\geq G(\overline x,\rho)$ for all $(x,\rho)\in [0,\pi/2]^2$.

Repeating the above arguments, but inverting the variables $x$ and $\rho$, we get the statement $(i)$.\par 
\vspace{2mm}
\emph{Proof of (ii) and (iii).} 
For all $(x,\rho)\in [0,\pi]^2$ we get 
\begin{equation*}
\begin{split}
&G(\overline x,\overline \rho)=
\dfrac{1}{2\pi}\sum\limits_{m=1}^{+\infty} \dfrac{\phi_m}{m^3}\sin[m(\pi-\rho)]\sin[m(\pi-x)]=\dfrac{1}{2\pi}\sum\limits_{m=1}^{+\infty} \dfrac{\phi_m}{m^3}\sin(m\rho)\sin(mx)=G(x,\rho)\,,\\
 &G(\overline x,\rho)=
 \dfrac{1}{2\pi}\sum\limits_{m=1}^{+\infty} \dfrac{\phi_m}{m^3}\sin(m\rho)\sin[m(\pi-x)]=\dfrac{1}{2\pi}\sum\limits_{m=1}^{+\infty}(-1)^{m+1} \dfrac{\phi_m}{m^3}\sin(m\rho)\sin(mx)=G(x,\overline \rho)\,.
 \end{split}
 \end{equation*}

\end{proof}

Thanks to Lemma \ref{lemma0} we obtain:

\begin{lemma}
	\label{sec:polar-symm-prop}
	Let $\cG: L^{2}(\Omega) \to H^2_*(\Omega)$ denote the solution operator defined by \eqref{green-operator1}. Then,
	\begin{equation}
	\label{eq:27}
	\begin{split}
	&\int_{\Omega } \cG(p_u u)\, \,p_u(x,y)u(x,y)\:dxdy\leq	\int_{\Omega } \cG(p_{u_\HH}u_\HH)\,\,p_{u_\HH}(x,y)u_\HH(x,y)\:dxdy,
	\end{split}
	\end{equation}
	and the equality holds in \eqref{eq:27} if and only if 
	$$p_u(x,y)u(x,y)=[p_{u}(x,y)u(x,y)]_\HH \text{ a.e. in } \Omega\quad  \text{ or } \quad p_u(\overline x,y)u(\overline x,y)= [p_u(x,y)u(x,y)]_\HH \text{ a.e. in } \Omega\,.$$
\end{lemma}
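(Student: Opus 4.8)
The plan is to expand both sides of \eqref{eq:27} using the integral representation \eqref{green-operator}, obtaining a double integral over $\Omega \times \Omega$ of $G(x,y,\rho,w)$ against the product $[p_u u](x,y)\,[p_u u](\rho,w)$ on the left and $[p_{u_\HH} u_\HH](x,y)\,[p_{u_\HH} u_\HH](\rho,w)$ on the right. By Lemma \ref{lemma00} we may replace $p_{u_\HH} u_\HH$ by $[p_u u]_\HH$, so with the shorthand $g := p_u u$ it suffices to prove
\[
\int_\Omega\!\!\int_\Omega G(x,y,\rho,w)\, g(x,y)\, g(\rho,w)\,dxdy\,d\rho dw \;\le\; \int_\Omega\!\!\int_\Omega G(x,y,\rho,w)\, g_\HH(x,y)\, g_\HH(\rho,w)\,dxdy\,d\rho dw.
\]
First I would reduce the domain of integration in both $(x,y)$ and $(\rho,w)$ to $\HH \cap \Omega$: since the reflection $x \mapsto \overline x$ is measure-preserving and maps $\Omega \setminus \HH$ onto $\HH \cap \Omega$, the full double integral splits into four pieces indexed by whether each of the two points lies in $\HH$ or not, and one rewrites each piece as an integral over $(\HH\cap\Omega)^2$, carrying the reflection into the arguments of $G$ via Lemma \ref{lemma0}(ii)–(iii).

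The heart of the argument is then the following pointwise two-point inequality, to be checked for $(x,y),(\rho,w) \in \HH\cap\Omega$. Writing $a = g(x,y)$, $b = g(\overline x,y)$, $c = g(\rho,w)$, $d = g(\overline\rho,w)$, and using the abbreviations $A = G(x,y,\rho,w)$, $B = G(\overline x,y,\rho,w) = G(x,y,\overline\rho,w)$ (equal by Lemma \ref{lemma0}(iii)), one must show that the difference (right minus left), after collecting the four $G$-terms, equals
\[
(A - B)\,\bigl( \max\{a,b\}\max\{c,d\} + \min\{a,b\}\min\{c,d\} - ac - bd \bigr) \;\ge\; 0.
\]
Here $A - B \ge 0$ by Lemma \ref{lemma0}(i), and the bracketed quantity is the classical rearrangement inequality $\max\{a,b\}\max\{c,d\} + \min\{a,b\}\min\{c,d\} \ge ac + bd$, which holds for all reals (it is equivalent to $(\,|a-b|\,)(\,|c-d|\,) \ge (a-b)(c-d)$ after expansion). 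Integrating this pointwise inequality over $(\HH\cap\Omega)^2$ gives \eqref{eq:27}. Note $g = p_u u \ge 0$ since $u \ge 0$, so all quantities are nonnegative, though nonnegativity is not actually needed for the rearrangement step.

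For the equality case, I would argue that equality in the integrated inequality forces, for a.e. pair, either $A = B$ or the rearrangement term to vanish. Using Lemma \ref{lemma0}(i), $A = B$ only on the null set where $x$ or $\rho$ lies in $\{0,\tfrac\pi2\}$, so for a.e. pair the rearrangement term must vanish, i.e. $\{a \ge b \text{ and } c \ge d\}$ or $\{a \le b \text{ and } c \le d\}$ a.e.; a Fubini/connectedness argument on $\HH\cap\Omega$ then shows that either $g(x,y) \ge g(\overline x,y)$ for a.e. $(x,y)$, which means $g = g_\HH$ (equivalently $p_u u = [p_u u]_\HH$), or the reverse inequality holds a.e., which means $g(\overline x, y) = g_\HH(x,y)$, i.e. $p_u(\overline x,y)u(\overline x,y) = [p_u u]_\HH$. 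I expect the main obstacle to be precisely this equality discussion: turning the "for a.e. pair, one of two local alternatives holds" statement into a single global alternative requires care, since a priori the sign of $g(x,y) - g(\overline x,y)$ could vary with $(x,y)$; the resolution is that if it is positive on a set of positive measure and negative on another, one produces a pair violating the pointwise condition. The forward implications of the lemma (that each of the two displayed identities yields equality) follow immediately by running the computation backwards together with Lemma \ref{lemma5}.
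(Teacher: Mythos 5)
Your argument is correct and follows the same overall strategy as the paper's proof: both reduce the two quadratic forms to integrals over $(\Omega\cap\HH)\times(\Omega\cap\HH)$ via the quadrant decomposition and the reflection identities of Lemma \ref{lemma0}$(ii)$--$(iii)$, replace $p_{u_\HH}u_\HH$ by $[p_uu]_\HH$ through Lemma \ref{lemma00}, use the sign of $G(x,y,\rho,w)-G(x,y,\overline\rho,w)$ from Lemma \ref{lemma0}$(i)$, and settle the equality case by observing that the integrand is strictly positive on a product set $D_1\times D_2$, forcing one of the two factors' supports to be null. The one genuine difference lies in how the difference of the two quadratic forms is organized: the paper inserts the cross term $A(u_\HH,u)$ and telescopes, ending with an integrand that is a product of three separately nonnegative factors, whereas you compute $A(u_\HH,u_\HH)-A(u,u)$ in a single pass and identify the resulting bracket as the classical two-point rearrangement inequality $\max\{a,b\}\max\{c,d\}+\min\{a,b\}\min\{c,d\}\geq ac+bd$. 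Your version makes the underlying Hardy--Littlewood-type mechanism explicit and is symmetric in the two variables (the two pointwise integrands differ, but agree after symmetrizing over $(x,y)\leftrightarrow(\rho,w)$, so both are legitimate); the paper's telescoping has the minor advantage that each factor's sign is read off directly from Lemma \ref{lemma00} without invoking the rearrangement identity. Your closing remarks are also accurate: the forward implication for the case $[p_uu]_\HH=p_uu(\overline x,y)$ follows from $G(\overline x,y,\overline\rho,w)=G(x,y,\rho,w)$ and a change of variables (Lemma \ref{lemma5} is not actually needed for the equality in \eqref{eq:27} itself), and the ``connectedness'' worry you flag resolves exactly as you say, via positivity of the product measure $|D_1|\,|D_2|$.
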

\begin{proof} We define
	$$
	A(g,h):=\int_{\Omega \times \Omega} G(x,y,\rho,w)
	p_g (x,y) p_h(\rho,w) g(x,y)h(\rho,w)\:dxdyd\rho dw 
	$$
	where $g$ and $h$ have to be meant equal to $u$ or $u_\HH$\,. Then, by writing $\Omega \times \Omega=[(\Omega \cap \HH) \times(\Omega \cap \HH)] \cup [(\Omega \cap \HH) \times(\Omega \setminus \HH)] \cup [(\Omega \setminus \HH) \times(\Omega \cap \HH)]\cup [(\Omega \setminus \HH) \times(\Omega \setminus \HH)]$ and changing variables properly, we get
	\begin{align*}
	&A(u_\HH,u_\HH)-A(u_\HH,u)=\nonumber\\
	&\!\int_{(\Omega \cap \HH) \times(\Omega \cap \HH)}\! G(x,y,\rho,w)p_{u_\HH}(x,y)u_\HH(x,y)[p_{u_\HH}(\rho,w)u_\HH(\rho,w)-p_u(\rho,w)u(\rho,w)]\:dxdyd\rho dw \nonumber\\
	+&\!\int_{(\Omega \cap \HH) \times(\Omega \cap \HH)}\! G(x,y,\overline \rho,w)p_{u_\HH}(x,y)u_\HH(x,y)[p_{u_\HH}(\overline \rho,w)u_\HH(\overline \rho,w)-p_u(\overline \rho,w)u(\overline \rho,w)]\:dxdyd\rho dw \nonumber\\
	+&\!\int_{(\Omega \cap \HH) \times(\Omega \cap \HH)}\! G(\overline x,y,\rho,w)p_{u_\HH}(\overline x,y)u_\HH(\overline x,y)[p_{u_\HH}(\rho,w)u_\HH(\rho,w)-p_u(\rho,w)u(\rho,w)]\:dxdyd\rho dw \nonumber\\
	+&\!\int_{(\Omega \cap \HH) \times(\Omega \cap \HH)}\! G(\overline x,y,\overline \rho,w)p_{u_\HH}(\overline x,y)u_\HH(\overline x,y)[p_{u_\HH}(\overline \rho,w)u_\HH(\overline \rho,w)-p_u(\overline \rho,w)u(\overline \rho,w)]\:dxdyd\rho dw.
	\label{A1}
	\end{align*}
By Lemma \ref{lemma00} and \eqref{prop2} we have 
$$
p_{u_\HH}(\rho,w)u_\HH(\rho,w)-p_u(\rho,w)u(\rho,w)=-[p_{u_\HH}(\overline\rho,w)u_\HH(\overline\rho,w)-p_u(\overline\rho,w)u(\overline\rho,w)]\qquad \forall (\rho,w)\in \Omega,
$$ 
so that 
\small
\begin{equation*}
\begin{split}
&A(u_\HH,u_\HH)-A(u_\HH,u)=\nonumber\\
&\!\int_{(\Omega \cap \HH) \times(\Omega \cap \HH)}\! [G(x,y,\rho,w)-G(x,y,\overline\rho,w)]p_{u_\HH}(x,y)u_\HH(x,y)[p_{u_\HH}(\rho,w)u_\HH(\rho,w)-p_u(\rho,w)u(\rho,w)]\:dxdyd\rho dw \nonumber\\
+&\!\int_{(\Omega \cap \HH) \times(\Omega \cap \HH)}\! [G(\overline x,y,\rho,w)-G(\overline x,y,\overline\rho,w)]p_{u_\HH}(\overline x,y)u_\HH(\overline x,y)[p_{u_\HH}(\rho,w)u_\HH(\rho,w)-p_u(\rho,w)u(\rho,w)]\:dxdyd\rho dw.
\end{split}
\label{A2}
\end{equation*}
\normalsize
Then, thanks to Lemma \ref{lemma0} $(ii)$ and $(iii)$ we conclude that
\begin{equation}
\begin{split}
&A(u_\HH,u_\HH)-A(u_\HH,u)=\!\int_{(\Omega \cap \HH) \times(\Omega \cap \HH)}\! [G(x,y,\rho,w)-G(x,y,\overline\rho,w)]\\
&\times [p_{u_\HH}(x,y)u_\HH(x,y)-p_{u_\HH}(\overline x,y)u_\HH(\overline x,y)][p_{u_\HH}(\rho,w)u_\HH(\rho,w)-p_u(\rho,w)u(\rho,w)]\:dxdyd\rho dw\,.
\end{split}
\label{A3}
\end{equation}
With similar arguments we get
\begin{equation}
\begin{split}
&A(u_\HH,u)-A(u,u)=\!\int_{(\Omega \cap \HH) \times(\Omega \cap \HH)}\! [G(x,y,\rho,w)-G(x,y,\overline\rho,w)]\\&\times [p_{u}(x,y)u(x,y)-p_{u}(\overline x,y)u(\overline x,y)][p_{u_\HH}(\rho,w)u_\HH(\rho,w)-p_u(\rho,w)u(\rho,w)]\:dxdyd\rho dw
\end{split}
\label{A4}
\end{equation}
and combining \eqref{A3}-\eqref{A4} we obtain
\begin{equation*}
\begin{split}
&A(u_\HH,u_\HH)-A(u,u)=\!\int_{(\Omega \cap \HH) \times(\Omega \cap \HH)}\! [G(x,y,\rho,w)-G(x,y,\overline\rho,w)] [p_{u_\HH}(\rho,w)u_\HH(\rho,w)-p_u(\rho,w)u(\rho,w)]\\
&\times[p_{u_\HH}(x,y)u_\HH(x,y)-p_{u_\HH}(\overline x,y)u_\HH(\overline x,y)+p_{u}(x,y)u(x,y)-p_{u}(\overline x,y)u(\overline x,y)]\:dxdyd\rho dw.
\end{split}
\end{equation*}
Now, by Lemma \ref{lemma0}-$(i)$, we know that $G(x,y,\rho,w)-G(x,y,\overline\rho,w)\geq0$ while, by Lemma \ref{lemma00}, we get
$$p_{u_\HH}(\rho,w)u_\HH(\rho,w)-p_u(\rho,w)u(\rho,w)=[p_u(\rho,w)\,u(\rho,w)]_\mathcal{H}-p_u(\rho,w)u(\rho,w)\geq 0\qquad \forall (\rho,w)\in \Omega \cap \HH\,.$$
Finally, \eqref{eq:27} follows by noticing that, through Lemma \ref{lemma00} and \eqref{prop2}, we have
\begin{equation*}
\begin{split}
&[p_{u_\HH}(x,y)u_\HH(x,y)-p_{u_\HH}(\overline x,y)u_\HH(\overline x,y)+p_{u}(x,y)u(x,y)-p_{u}(\overline x,y)u(\overline x,y)]\\
&=2\{[p_u(x,y)u(x,y)]_\HH-p_{u}(\overline x,y)u(\overline x,y)\}\geq 0\qquad \forall (x,y)\in \Omega \cap\HH\,.
\end{split}
\end{equation*}
To prove the last part of the statement we set
$D_1:=\{(x,y)\in\Omega \cap \HH:\, p_u(x,y)u(x,y)>p_u(\overline x,y)u(\overline x,y)\}$ and $D_2:=\{(\rho,w)\in\Omega \cap \HH:\,  [p_u(\rho,w)u(\rho,w)]_{\HH}>p_u(\rho,w)u(\rho,w)\}$. If equality holds in \eqref{eq:27} we get
\begin{equation}
\begin{split}
0&=A(u_\HH,u_\HH)-A(u,u) \\
&= \!\int_{D_1\times D_2}\! [G(x,y,\rho,w)-G(x,y,\overline\rho,w)] \{[p_u(\rho,w)\,u(\rho,w)]_\mathcal{H}-p_u(\rho,w)u(\rho,w)\}\\&\times2\{[p_u(x,y)u(x,y)]_\HH-p_{u}(\overline x,y)u(\overline x,y)\}\:dxdyd\rho dw\,.
\end{split}
\label{A6}
\end{equation}
Now, \eqref{A6} makes sense if and only if $|D_1|=0$ or $|D_2|=0$, i.e., if and only if $[p_u(x,y)u(x,y)]_\HH=p_u(\overline x,y)u(\overline x,y)$ or $[p_{u}(x,y)u(x,y)]_\HH=p_u(x,y)u(x,y)$
a.e.\ in $\Omega$.
\end{proof}
\par \medskip \par
\textbf{Proof of Theorem \ref{partialsym} completed.} Thanks to Lemma \ref{dualityprinciple} we have that $u$ is a maximizer for \eqref{eq:12} with $p=p_u$. Then, since $(u,p_u)$ is an optimal pair, $u_\HH \in L^2(\Omega)$ and ${p_{u_{\mathcal{H}}}}\in P_{\alpha, \beta}$, we infer that
	$$
	\frac{\int_{\Omega}\cG(p_uu)\,p_u\,u\:dxdy}{\|\sqrt{p_u}\,u\|^2_2}=\Theta_1(p_u)\geq \Theta_1(p_{u_\HH}) \geq\frac{\int_{\Omega}\cG(p_{u_\HH}u_\HH)\,p_{u_\HH}\,u_\HH\:dxdy}{\|\sqrt{p_{u_\HH}}\,u_\HH\|^2_2}.
	$$
	Recalling that, by Lemma \ref{lemma5}, $\|\sqrt{p_{u_\HH}}u_\HH\|_2=\|\sqrt{p_u} u\|_2$, from above we get that
	$$\int_{\Omega}\cG(p_uu)\,p_u\,u\:dxdy \geq \int_{\Omega}\cG(p_{u_\HH}u_\HH)\,p_{u_\HH}\,u_\HH\:dxdy\,.$$
	
	Then, by Lemma \ref{sec:polar-symm-prop}, \eqref{eq:27} holds with the equality and, in view of Lemma \ref{lemma10}, this implies $u(x,y)=u_\HH(x,y)$ or $u(\overline x,y)= u_\HH(x,y)$ a.e. in $\Omega$. Since $u$ is continuous, we obtain
	\begin{equation}\label{eq10}
	\begin{split}
u(x,y)\geq u(\overline x,y)\quad \text{in } \overline \Omega\cap \HH \qquad	\text{or} \qquad u(x,y)\leq u(\overline x,y)\quad \text{in }\overline \Omega\cap \HH.
	\end{split}
	\end{equation}
	Let us consider the first case of \eqref{eq10}; then, it is readily seen that:
	\begin{equation}\label{eqiii1}
	p_{u}(x,y)u(x,y)\geq p_u(\overline x,y)\,u(\overline x,y)\quad \forall (x,y)\in \overline \Omega\cap \HH\,.
	\end{equation}
	Indeed, if $p_{u}(x_0,y_0)u(x_0,y_0)< p_u(\overline x_0,y_0)\,u(\overline x_0,y_0)$ for some $(x_0,y_0)\in \overline \Omega\cap \HH$, by \eqref{eq10} we get $p_{u}(x_0,y_0)=\alpha$ and $p_{u}(\overline x_0,y_0)=\beta$. But then $u(\overline x_0,y_0)\leq u( x_0,y_0) \leq \sqrt{t}$ and $p_{u}(\overline x_0,y_0)=\alpha$ which is a contradiction.\par Suppose now that there exists $(x_1,y_1)\in  \overline \Omega\cap \HH$ such that the strict inequality holds in the first of \eqref{eq10}, clearly $x_1 \neq 0, \pi/2$. Then, by continuity, there exists a subset $U\subset( \overline \Omega\cap \HH)$ of positive measure such that $u(x,y)>u(\overline x,y)$ for all $(x,y)\in U$ and, by arguing as for the proof of \eqref{eqiii1}, such that
	\begin{equation}\label{eqiii}
	p_{u}(x,y)u(x,y)> p_u(\overline x,y)\,u(\overline x,y)\quad \forall (x,y)\in U\,.
	\end{equation}
  Finally, through Lemma \ref{lemma0}, \eqref{eqiii1} and \eqref{eqiii}, for all $(x,y) \in (0, \frac{\pi}{2})\times[-\ell,\ell]$ we obtain
	\small
	\begin{equation*}
	\begin{split}
	&u(x,y)-u(\overline x,y)=\int_{\Omega}[G(x,y,\rho,w)-G(\overline x, y,\rho,w)]p_u(\rho,w)u(\rho,w)\,d\rho dw\\
	&=\int_{\Omega\cap \HH}\{
	[G(x,y,\rho,w)-G(\overline x, y,\rho,w)]p_u(\rho,w)u(\rho,w)+[G(x,y,\overline \rho,w)-G(\overline x, y,\overline \rho,w)]p_u(\overline \rho,w)u(\overline \rho,w)\}\,d\rho dw\\
	&=\int_{\Omega\cap \HH}
	[G(x,y,\rho,w)-G(\overline x, y,\rho,w)][p_u(\rho,w)u(\rho,w)-p_u(\overline \rho,w)u(\overline \rho,w)]\,d\rho dw\\
	&\geq \int_{U}
	[G(x,y,\rho,w)-G(\overline x, y,\rho,w)][p_u(\rho,w)u(\rho,w)-p_u(\overline \rho,w)u(\overline \rho,w)]\,d\rho dw>0,
	\end{split}
	\end{equation*}
	\normalsize
	implying that $(i)$ or $(iii)$ holds. Similarly, if we consider the second inequality in \eqref{eq10}, we get that $(ii)$ or $(iii)$ holds. This concludes the proof.

\section{Proof of Proposition \ref{lemma2}}
First, for all $ \rho\in(0,\pi)$ and $y,w\in[-\ell,\ell]$, we note that
	$$
	G_x\bigg(\frac{\pi}{2},y,\rho,w\bigg)=\dfrac{1}{8\pi}\sum\limits_{k=1}^\infty (-1)^{k}\dfrac{\phi_{2k}(y,w)}{k^2}\sin(2k\rho)=-G_x\bigg(\frac{\pi}{2},y,\overline\rho,w\bigg) \,.
	$$
By exploiting the above equality we write
\begin{equation*}
\begin{split}
u_x\bigg(\frac{\pi}{2},y\bigg)=\int_{-\ell}^\ell \int_0^{\pi/2} G_x\bigg(\frac{\pi}{2},y,\rho,w\bigg)[p_u(\rho,w)u(\rho,w)-p_u(\overline\rho,w)u(\overline \rho,w)]\,d\rho dw\quad \forall y\in[-\ell,\ell].
\end{split}
\end{equation*}
From Theorem \ref{greenmonotonia} we know that $G_x\big(\frac{\pi}{2},y,\rho,w\big)<0$ for all $\rho\in \big(0,\frac{\pi}{2}\big)$; then, if case $(i)$ of Theorem \ref{partialsym} holds, by \eqref{eqiii1}-\eqref{eqiii}, we get $p_u(\rho,w)u(\rho,w)>p_u(\overline\rho,w)u(\overline \rho,w)$ and, in turn, that $u_x\big(\frac{\pi}{2},y\big)<0$ for all $y\in[-\ell,\ell]$. Similarly, the reverse inequality holds if case $(ii)$ occurs. Finally, when $(iii)$ holds, then $p_u(\rho,w)u(\rho,w)\equiv p_u(\overline\rho,w)u(\overline \rho,w)$ in $\overline \Omega$, hence $u_x\big(\frac{\pi}{2},y\big)=0$ for all $y\in[-\ell,\ell]$.

\section*{Appendix}
\begin{lemma}\label{lemmasin2}
	Let $N\geq 3$ be an integer. For all $z\in\big(0,\frac{\pi}{N+1}\big)$ and for all $m=3,\dots,N$, there holds
	$$
	\upsilon_m(z):=\dfrac{\sin(mz)}{m^2}-\dfrac{\sin[(m+1)z]}{(m+1)^{2}}-\sin(z)\bigg[\dfrac{1}{m}-\dfrac{1}{(m+1)}\bigg]^2>0\,.
	$$
\end{lemma}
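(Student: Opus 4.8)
The plan is to reduce $\upsilon_m(z)>0$ to an equivalent trigonometric inequality and then split into two cases according to the sign of $\cos(mz)$. First I would use $\frac1m-\frac1{m+1}=\frac1{m(m+1)}$, so that $\big[\frac1m-\frac1{m+1}\big]^2=\frac1{m^2(m+1)^2}$; multiplying $\upsilon_m(z)$ by the positive number $m^2(m+1)^2$, the inequality $\upsilon_m(z)>0$ is equivalent to $(m+1)^2\sin(mz)-m^2\sin\big((m+1)z\big)>\sin z$, and, after expanding $\sin\big((m+1)z\big)=\sin(mz)\cos z+\cos(mz)\sin z$, to
\[
\sin(mz)\,\big[(m+1)^2-m^2\cos z\big]>\sin z\,\big[m^2\cos(mz)+1\big]. \qquad (\sharp)
\]
For $z\in\big(0,\frac\pi{N+1}\big)$ and $3\le m\le N$ one has $mz\le Nz<\pi$, hence $\sin(mz)>0$, and $\cos z\le1$ gives $(m+1)^2-m^2\cos z\ge(m+1)^2-m^2=2m+1>0$; in particular the left-hand side of $(\sharp)$ is positive.

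If $\cos(mz)\le0$, then $m^2\cos(mz)+1\le1$, so the right-hand side of $(\sharp)$ is at most $\sin z$; on the other hand the elementary inequality $\sin(mz)>\sin z$ — valid for $m=2,\dots,N$ and $z\in\big(0,\frac\pi{N+1}\big)$, see \cite[Lemma 6.3]{ppp}, which is also invoked in the proof of Lemma \ref{lemma4} — makes the left-hand side of $(\sharp)$ at least $(2m+1)\sin(mz)>(2m+1)\sin z>\sin z$, and $(\sharp)$ follows.

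If instead $\cos(mz)>0$, then $mz\in\big(0,\frac\pi2\big)$. I would then bound the left-hand side of $(\sharp)$ from below using Jordan's inequality $\sin(mz)\ge\frac2\pi\,mz$ together with $(m+1)^2-m^2\cos z\ge2m+1$, obtaining $\sin(mz)\big[(m+1)^2-m^2\cos z\big]\ge\frac2\pi\,mz\,(2m+1)$; and the right-hand side from above by $\sin z\le z$ and $\cos(mz)\le1$, obtaining $\sin z\big[m^2\cos(mz)+1\big]\le z\,(m^2+1)$. Dividing by $z>0$, it then suffices to check the numerical inequality $\frac2\pi\,m(2m+1)>m^2+1$, i.e.\ $(4-\pi)m^2+2m-\pi>0$; since the left-hand side is increasing for $m>0$ and equals $42-10\pi>0$ at $m=3$, it is positive for every $m\ge3$, which settles this case.

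The step I expect to require the most thought is simply identifying the reformulation $(\sharp)$ and realizing that the sign of $\cos(mz)$ is the natural case distinction: when $\cos(mz)\le0$ the right-hand side of $(\sharp)$ is controlled for free by the already available bound $\sin(mz)>\sin z$, whereas when $\cos(mz)>0$ the angles $mz$ and $z$ are small enough that Jordan's inequality on the left together with $\sin z\le z$ on the right close the estimate with a wide margin. Once this is in place, no sharp estimates are needed and the remaining computations are routine.
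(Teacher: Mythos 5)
Your proof is correct, and it takes a genuinely different route from the paper's. The paper argues via derivatives: it notes $\upsilon_m(0)=0$, shows $\upsilon_m''>0$ on $\big(0,\frac{\pi}{2m+1}\big)$ by locating the zeros of $\sin[(m+1)z]-\sin(mz)$, and deduces positivity there; on the remaining piece $\big(\frac{\pi}{2m+1},\frac{\pi}{m+1}\big)$ it passes to a concave minorant $\overline{\upsilon}_m$ and checks positivity at the two endpoints. You instead clear denominators and apply the addition formula to rewrite the claim as $\sin(mz)\big[(m+1)^2-m^2\cos z\big]>\sin z\big[m^2\cos(mz)+1\big]$, then split on the sign of $\cos(mz)$: when $\cos(mz)\le 0$ the right-hand side is at most $\sin z$ (or is nonpositive) and the bound $\sin(mz)>\sin z$ --- which the paper already imports from \cite{ppp} in Step 2 of the proof of Lemma \ref{lemma4}, so nothing new or circular is introduced --- finishes at once; when $\cos(mz)>0$ one has $mz\in(0,\pi/2)$, and Jordan's inequality together with $\sin z\le z$ reduces everything to $(4-\pi)m^2+2m-\pi>0$, which holds for all $m\ge 3$. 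All steps check out (in particular $mz\le Nz<\pi$ guarantees $\sin(mz)>0$ and validates the case analysis). Your version is shorter, avoids the auxiliary function and the endpoint evaluations, and quantifies the margin explicitly; the paper's version is self-contained on the interval $\big(0,\frac{\pi}{2m+1}\big]$ where it needs only the convexity computation, but overall your argument is the more economical of the two.
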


\begin{proof}
 Clearly, $\upsilon_m(0)=0$; we set $a_m:=\big[\frac{1}{m^2}-\frac{1}{(m+1)^2}\big]^2$ and we compute 
	$$
	\upsilon_m'(z)=\frac{\cos(m z)}{m}-\frac{\cos[(m+1) z]}{m+1}-a_m\cos(z)\qquad\upsilon_m''(z)=\sin[(m+1)z]-\sin(mz)+a_m\sin(z).
	$$
Using the complex identities for the trigonometric functions we obtain
	\begin{equation}\label{cx}
	\begin{split}
	\sin[(m+1)\overline z]-\sin(m\overline z)=0\quad &\iff\quad \overline z=2k\pi,\frac{(1+2k)\pi}{2m+1}\quad \forall k\in\mathbb{Z}
	\end{split}
	\end{equation}
	Hence $\sin[(m+1) z]>\sin(m z)$ for $z\in\big(0,\frac{\pi}{2m+1}\big)$ and $\upsilon_m''(z)>0$ for $z\in\big(0,\frac{\pi}{2m+1}\big)$; this readily implies that $\upsilon_m(z)>0$ for $z\in\big(0,\frac{\pi}{2m+1}\big]$.  \par 
	
	For $z\in\big(\frac{\pi}{2m+1},\frac{\pi}{m+1}\big) $ we have
	\begin{equation*}
	\begin{split}
	\upsilon_m(z)&=\frac{\sin(m z)}{m^2}-\frac{\sin[(m+1) z]}{(m+1)^{2}}-\dfrac{\sin z}{m}\bigg[\dfrac{1}{m}-\dfrac{1}{m+1}\bigg]+\dfrac{\sin z}{m+1}\bigg[\dfrac{1}{m}-\dfrac{1}{m+1}\bigg]\\&>\frac{\sin(m z)}{m^2}-\frac{\sin[(m+1) z]}{(m+1)^{2}}-\dfrac{\sin\big[\frac{\pi}{m+1}\big] }{m}\bigg[\dfrac{1}{m}-\dfrac{1}{m+1}\bigg]:=\overline \upsilon_m (z).
	\end{split}
	\end{equation*}
We study the sign of $\overline \upsilon_m (z)$ for $z\in\big(\frac{\pi}{2m+1},\frac{\pi}{m+1}\big)$. We have $$\overline \upsilon_m'' (z)=\sin[(m+1) z]-\sin(m z)<0\qquad \forall z\in\bigg(\frac{\pi}{2m+1},\frac{\pi}{m+1}\bigg), $$
since, by \eqref{cx}, we have $\sin[(m+1) z]-\sin(m z)<0$ for $z\in\big(\frac{\pi}{2m+1},\frac{3\pi}{2m+1}\big)$ and $\frac{\pi}{m+1}<\frac{3\pi}{2m+1}$ for $m\geq 3$. Thus if $\overline \upsilon_m \big(\frac{\pi}{m+1}\big)>0$ and $\overline \upsilon_m \big(\frac{\pi}{2m+1}\big)>0$ we conclude that $\overline \upsilon_m(z)>0$ for $z\in\big(\frac{\pi}{2m+1},\frac{\pi}{m+1}\big)$ and, in turn, $\upsilon_m(z)>0$ for all $z\in\big(\frac{\pi}{2m+1},\frac{\pi}{m+1}\big)$.\par 
	Recalling that $\sin\big(\frac{m\pi}{m+1}\big)=\sin\big(\frac{\pi}{m+1}\big)$ we get
	$$
	\overline \upsilon_m \bigg(\frac{\pi}{m+1}\bigg)=\sin\bigg(\dfrac{\pi}{m+1}\bigg)\bigg[\dfrac{1}{m^2}-\dfrac{1}{m^2}+\dfrac{1}{m(m+1)}\bigg]>0\qquad \forall m\geq 3.
	$$
Moreover $\sin\big(\frac{m\pi}{2m+1}\big)=\sin\big(\frac{(m+1)\pi}{2m+1}\big)$ so that
	$$
\overline \upsilon_m \bigg(\frac{\pi}{2m+1}\bigg)=\sin\bigg(\dfrac{m\pi}{2m+1}\bigg)\bigg[\dfrac{1}{m^2}-\dfrac{1}{(m+1)^2}\bigg]-\sin\bigg(\frac{\pi}{m+1}\bigg) \bigg[\dfrac{1}{m^2}-\dfrac{1}{m(m+1)}\bigg].
$$
We observe that $\sin\big(\frac{m\pi}{2m+1}\big)>\sin\big(\frac{\pi}{m+1}\big)>0$ for all $m\geq 3$, indeed $\frac{\pi}{2}>\frac{m\pi}{2m+1}>\frac{\pi}{m+1}>0$ for all $m\geq 3$; moreover we have $\frac{1}{m^2}-\frac{1}{(m+1)^2}>\frac{1}{m^2}-\frac{1}{m(m+1)}>0$, implying 	$
\overline \upsilon_m \big(\frac{\pi}{2m+1}\big)>0$.
	This concludes the proof.
\end{proof}

\par\bigskip\noindent
 \textbf{Acknowledgments.} The authors are members of the Gruppo Nazionale per l'Analisi Matematica, la Probabilit\`a e le loro Applicazioni (GNAMPA) of the Istituto Nazionale di Alta Matematica (INdAM) and are partially supported by the INDAM-GNAMPA 2019 grant: ``Analisi spettrale per operatori ellittici con condizioni di Steklov o parzialmente incernierate'' and by the PRIN project 201758MTR2: ``Direct and inverse problems for partial differential equations: theoretical aspects and applications'' (Italy).

\end{document}